\definecolor{darkred}{RGB}{100,0,0}
\definecolor{darkgreen}{RGB}{0,100,0}
\definecolor{darkblue}{RGB}{0,0,150}
\def\sphere{\mathrm{S}}
\def\hplane{\mathrm{H}}
\def\interior{\mathrm{int}}
\def\closure{\mathrm{cl}}
\newtheorem{theorem}{Theorem}[section]
\newtheorem{proposition}[theorem]{Proposition}
\newtheorem{lemma}[theorem]{Lemma}
\newtheorem{corollary}[theorem]{Corollary}
\theoremstyle{remark}
\newtheorem{remark}[theorem]{Remark}
\numberwithin{equation}{section}
\numberwithin{figure}{section}
\def\beq{\begin{equation}} 
\def\eeq{\end{equation}}
\def\beqn{\begin{eqnarray*}}
\def\eeqn{\end{eqnarray*}}
\def\Bitem{\begin{itemize}\setlength{\itemsep}{.2in}}
\def\bitem{\begin{itemize}\setlength{\itemsep}{.05in}}
\def\eitem{\end{itemize}}
\def\Benum{\begin{enumerate}\setlength{\itemsep}{.2in}}
\def\benum{\begin{enumerate}\setlength{\itemsep}{.05in}}
\def\eenum{\end{enumerate}}
\def\bmult{\begin{multline*}}
\def\emult{\end{multline*}}
\def\bcenter{\begin{center}}
\def\ecenter{\end{center}}
\def\bframe{\begin{frame}}
\def\eframe{\end{frame}}
\newcommand{\thmref}[1]{Theorem~\ref{thm:#1}}
\newcommand{\prpref}[1]{Proposition~\ref{prp:#1}}
\newcommand{\lemref}[1]{Lemma~\ref{lem:#1}}
\newcommand{\secref}[1]{Section~\ref{sec:#1}}
\def\cA{\mathcal{A}}
\def\cB{\mathcal{B}}
\def\cH{\mathcal{H}}
\def\cL{\mathcal{L}}
\def\cR{\mathcal{R}}
\def\cS{\mathcal{S}}
\def\cX{\mathcal{X}}
\def\cY{\mathcal{Y}}
\def\bbI{\mathbb{I}}
\def\bbN{\mathbb{N}}
\def\bbR{\mathbb{R}}
\def\bbS{\mathbb{S}}
\let\lac\{
\let\rac\}
\renewcommand{\{}{\left\lac}
\renewcommand{\}}{\right\rac}
\def\implies{\ \Rightarrow \ }
\def\iff{\ \Leftrightarrow \ }
\newcommand{\IND}[1]{\bbI\{ #1 \}}
\def\({\left(}
\def\){\right)}
\DeclareMathOperator{\Vect}{vect}
\DeclareMathOperator{\Span}{span}
\DeclareMathOperator{\dir}{dir}
\newcommand\ball{\mathrm{B}}
\definecolor{purple}{rgb}{0.4,.1,.9}
\begin{document}
\thispagestyle{empty}

\title{Theoretical Foundations of Ordinal Multidimensional Scaling, Including Internal Unfolding and External Unfolding}
\author{
Ery Arias-Castro\,\footnote{Department of Mathematics and Halıcıoğlu Data Science Institute, University of California, San Diego} 
\and
Clément Berenfeld\,\footnote{PreMeDICaL, INRIA, Universit\'e de Montpellier, France}
\and Daniel Kane\,\footnote{Department of Computer Science and Department of Mathematics, University of California, San Diego} 
}
\date{}
\maketitle

\begin{abstract}
We provide a comprehensive theory of multiple variants of ordinal multidimensional scaling, including internal unfolding and external unfolding. We first follow \citet{shepard1966metric} and work in a continuum model to gain insight. We then follow \citet{klein} and work in an asymptotic discrete setting.  

\medskip\noindent
{\em Keywords and phrases:} ordinal embedding; non-metric multidimensional scaling (MDS); internal unfolding; preference data; external unfolding; lateration 
\end{abstract}

\section{Introduction} 
\label{sec:introduction}

{\em Multidimensional scaling (MDS)} is an umbrella name for various tasks and accompanying methods that aim at embedding a set of abstract items based on pairwise dissimilarity information. While MDS developed largely within Psychometrics, with early works dating back to the 1930s \cite{young1938discussion}, it is nowadays an integral part of multivariate analysis in Statistics \cite{MVA, seber2009multivariate} and of unsupervised learning in Machine Learning. The problem of MDS has been considered in other areas where it is known under different names, e.g., {\em embedding of metric spaces} in Mathematics and Computer Science \cite{blumenthal1938distance}; {\em Euclidean distance matrix completion} in Optimization \cite{laurent2001matrix}; and {\em sensor network localization} in Engineering \cite{priyantha2003anchor}. 
We provide further references throughout the article.

In its ordinal form, the basic problem of MDS consists in finding a configuration of points in a given Euclidean space whose pairwise distances agree in ranking as much as possible with a given (partial) ranking of the pairwise dissimilarities between the items.
This variant of the problem is particularly important in Psychometrics, where a human subject may be asked to compare objects in triads \cite{torgerson1952multidimensional} by answering questions such as ``Is item A closer to item B or item C?''.

Other popular variants of the problem are {\em internal unfolding} and {\em external unfolding}, as they are called in the Psychometrics literature. Internal unfolding is the problem of positioning individuals and objects in space based on preference data \cite{coombs1950psychological, bennett1956determination, tucker1960intra}. In the ordinal variant of the problem, a ranking of the objects is available for each individual. This is a special case of MDS where some of the dissimilarities --- those between individuals and those between objects --- are simply missing. 
In external unfolding \cite{gower1968adding, carroll1972individual, carroll1967}, the positions of the objects are known, i.e., the objects are already embedded. The problem is known as {\em lateration} in the Engineering literature \cite{glossary1994}. 
It is referred to as {\em interpolation} in \cite{delicado2024multidimensional}.

\subsection{Contribution and Content}
While most of the literature is dedicated to methods, there is much less available in terms of theory. In the present paper, we aim at giving a comprehensive theory of ordinal embedding in all these variants. More specifically, for each variant, we consider a {\em realizable} setting in which the items (both individuals and objects) are indeed points in a given Euclidean space and the ordinal information is congruent with the corresponding Euclidean distances. In that context, 
\begin{quote}
{\em We consider the fundamental question of whether there is enough information in the limit to recover the unknown positions up to the invariance inherent to the problem.}
\end{quote}
To gain insight, we first proceed as \citet{shepard1966metric} in his pioneering study of ordinal multidimensional scaling and pass to the limit to consider a continuum model. We then leverage the understanding gained from studying the embedding problem in the continuum model to better understand a discrete model inspired by the much more recent work of \citet{klein}.  

In \secref{external}, we consider ordinal external unfolding. 
In \secref{mds}, we consider ordinal multidimensional scaling in a setting where all triadic comparisons are available. We outline the reasoning of \citet{shepard1966metric} and contrast that with a different approach based on recent work by \citet{klein} and by ourselves \cite{arias2017some}. 
In \secref{internal}, we consider ordinal internal unfolding.
Each of these sections is structured as follows: We start by defining the problem in its commonly encountered discrete form; we then define, as \citet{shepard1966metric} did, a setting in the continuum that can be realistically seen as the limit of the discrete setting; and finally, we come back to the discrete setting where, by leveraging the insights gained from studying the problem in its continuum form, we establish a uniqueness result in the large-problem limit \`a la \citet{klein}. 
We close the paper with a brief discussion in \secref{discussion}.

We focus throughout on the {\em point model} in which the individuals are to be embedded as points \cite{coombs1950psychological}.
Note that we do not consider the sort of item response models featured in \cite{reckase2009multidimensional}, which have evolved from the ranking models that originated from early work of \citet{thurstone1927law}, \citet{bradley1952rank}, and others. 

The theory that we develop for external and internal unfolding is some of the only theory that we are aware of for these problems. While internal unfolding is known to be difficult in practice as discussed, e.g., in \cite{busing2005avoiding, borg2005modern}, our theory establishes the problem as well-posed under mild conditions --- even as it does not provide the practitioner with any insight on how to numerically solve the problem.   

\subsection{Notation}
For a positive integer $n$, $[n] := \{1,\dots,n\}$.
For a point $x \in \bbR^p$ and $r > 0$, $\ball(x,r)$ denotes the open ball centered at $x$ with radius $r$, and $\sphere(x,r)$ denotes the corresponding sphere.
The unit sphere will be denoted $\bbS^{p-1} := \{x \in \bbR^p : \|x\| = 1\}$.
For two distinct points $z,z' \in \bbR^p$, define $\hplane(z,z') = \{x : \|x-z\| = \|x-z'\|\}$, which is the affine hyperplane passing through $\frac12(z+z')$ perpendicular to $z-z'$; we also define $\hplane^+(z,z') = \{x : \|x-z\| < \|x-z'\|\}$, which is one of the two open half-spaces defined by that hyperplane.

\section{External Unfolding} 
\label{sec:external}

{\em External unfolding} is the problem of locating an individual in space based on preferences for some objects that are already positioned in that space. 
The earliest work that we know of in the literature is from \citet{gower1968adding}, who presents the problem as an out-of-sample extension of classical scaling. (We are talking about the method of \citet{young1938discussion}, later refined by \citet{torgerson1952multidimensional}, although Gower refers to his own work \cite{gower1966some}.)
We also know of contemporary work by \citet{carroll1967}, although only indirectly by way of \cite{srinivasan1973linear}.
While the term `external unfolding' is favored in Psychometrics \cite[Sec 16.1]{borg2005modern}, where it is often attributed to \citet{carroll1972individual}, the problem is best known in Engineering as `trilateration', `multilateration', or simply `lateration'  \cite{chrzanowski1965theoretical, glossary1994, fang1986trilateration, yang2009indoor, savvides2001dynamic, aspnes2006theory}.  
In Engineering, the objects are often called `anchors' or `landmarks'.
Some background is provided for statisticians in \cite{navidi1998statistical}.
While most of that work (in particular, within Engineering) has been done on the metric variant of the problem, the non-metric or ordinal variant has also received some attention, some of it being quite recent \cite{srinivasan1973linear, massimino2021you, davenport2013lost, anderton2019scaling}.

We consider what is known in Psychometrics as the point model, where the individuals and the objects are all represented by points in space \cite[Sec 14.1]{borg2005modern}. We do so in the non-metric or ordinal variant of the problem where an individual $x$ is to be located based on a ranking of the individual's preference for the objects. The objects are already embedded in space and preference is quantified in terms of the Euclidean distance.

\subsection{Discrete Setting}
\label{sec:external point discrete}
In the discrete (in fact, finite) setting --- which is the setting encountered in practice --- the problem can be described as follows: Given $y_1, \dots, y_n \in \bbR^p$ and a permutation $(r_1, \dots, r_n)$ of $(1, \dots, n)$,
\begin{equation}
\label{total_order_finite_point}
\text{Find $x \in \bbR^p$ such that $\|x-y_i\| < \|x-y_j\|$ whenever $r_i < r_j$.}
\end{equation}

The recent work by \citet{massimino2021you} provides the most comprehensive study to date. Even then, to simplify the analysis, the authors consider a variant of the problem where the design is random: objects are sampled iid from some isotropic normal distribution, yielding $y_1, \dots, y_n$ and $y'_1, \dots, y'_n$, and for an unknown point $x$ we have access to $\xi_i := \IND{\|x-y_i\| < \|x-y'_i\|}$ for all $i \in [n]$; that is, based on the pairs $(y_1, y'_1), \dots, (y_n, y'_n)$ and the comparisons $\xi_1, \dots, \xi_n$, the goal is to recover $x$. 
(This is the case in the noiseless setting. They also consider a noisy setting.) 
The assumption that the design is not only random but Gaussian is crucial to the analysis carried out in that paper.
In related work, \citet{pmlr-v97-canal19a} consider the problem of actively selecting the objects in order to maximize the accuracy in locating the individual. The authors provide an information lower bound for the problem and show that a Bayesian approach they propose matches that bound in order of magnitude.

\subsection{Continuum Setting}
\label{sec:external point continuum}
We are interested in whether the problem is constrained enough in order to recover the unknown location of the individual. This is clearly not the case in the discrete setting of \secref{external point discrete} as the solutions form an open set. However, we contend that the solution set reduces to a singleton in the limit of an infinite number of objects. Instead of tackling this claim in a frontal manner, in order to avoid technicalities and get to the core of the question, we follow \citet{shepard1966metric} and consider a limit model in the continuum where the set of objects is continuously infinite. 
This corresponds to the limit of the discrete model if we imagine the points representing the objects $y_1, \dots, y_n$ as filling a set, denoted $\cY$ henceforth. 

We say that $x, x' \in \bbR^p$ are equivalent with respect to $\cY$ if  
\begin{equation}
\label{external_point_continuum}
\text{$\|x-y\| < \|x-y'\| \iff \|x'-y\| < \|x'-y'\|$, \quad for all $y, y' \in \cY$.}
\end{equation}
Indeed, any such $x$ and $x'$ are indistinguishable in terms of their preference for the objects in the set $\cY$.
The central question of whether an individual can be located based on its preference for the objects can be phrased as follows: {\em Do equivalent points coincide?} The answer is positive under some conditions on the set of objects. 
In fact, it is enough that the points be weakly equivalent in the sense that \begin{equation}
\label{external_point_continuum_weak}
\begin{cases}
\|x-y\| < \|x-y'\| \Rightarrow \|x'-y\| \le \|x'-y'\|, \\
\|x-y\| \le \|x-y'\| \Leftarrow \|x'-y\| < \|x'-y'\|,
\end{cases}
\text{for all $y, y' \in \cY$.}
\end{equation}
We note that two points that are equivalent with respect to $\cY$ are also weakly equivalent, meaning that, if they satisfy \eqref{external_point_continuum}, they also satisfy \eqref{external_point_continuum_weak}.
The reason we work with the weaker condition \eqref{external_point_continuum_weak} is that we will refer to it later on.


\begin{theorem}
\label{thm:external_point}
If $\interior(\cY) \ne \emptyset$, weakly equivalent points must coincide. 
\end{theorem}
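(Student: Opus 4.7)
The plan is to argue by contraposition. Suppose $x \neq x'$; I will produce $y, y' \in \cY$ such that the perpendicular bisector $H(y, y')$ strictly separates $x$ from $x'$, which directly contradicts the equivalence condition \eqref{external_point_continuum}, since that condition is precisely the requirement that $x \in H^+(y, y')$ if and only if $x' \in H^+(y, y')$ for every $y, y' \in \cY$.

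Pick $y_0 \in \mathrm{int}(\cY)$ and $\epsilon > 0$ with $\ball(y_0, \epsilon) \subseteq \cY$, and parameterize candidate pairs as $y_\pm := y_0 \pm w/2$ for vectors $w$ with $\|w\| < 2\epsilon$, so that both $y_\pm$ belong to $\cY$. Expanding the squared norms yields the identity
\begin{equation*}
\|x - y_-\|^2 - \|x - y_+\|^2 \;=\; 2 \langle x - y_0, w\rangle,
\end{equation*}
and the analogous identity for $x'$. Hence $H(y_-, y_+)$ strictly separates $x$ from $x'$ if and only if the inner products $\langle x - y_0, w\rangle$ and $\langle x' - y_0, w\rangle$ carry opposite signs. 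The task thus reduces to choosing $y_0 \in \mathrm{int}(\cY)$ and a sufficiently short $w$ realizing this sign condition.

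If $y_0$ is chosen off the line through $x$ and $x'$, the vectors $x - y_0$ and $x' - y_0$ are linearly independent, so the set of $w$ satisfying $\langle x - y_0, w\rangle > 0 > \langle x' - y_0, w\rangle$ is a non-empty open cone and in particular contains vectors of arbitrarily small norm. Such a $y_0$ exists whenever $p \geq 2$, because a non-empty open subset of $\bbR^p$ cannot be contained in a single affine line. The only step requiring care is the algebraic reduction in the preceding paragraph; the rest is elementary linear algebra, with the caveat that in dimension one the argument additionally demands that $\mathrm{int}(\cY)$ meet the open segment $(x, x')$, since otherwise two distinct points lying on the same side of $\cY$ are equivalent.
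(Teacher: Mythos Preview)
Your argument is correct for $p \ge 2$, and it takes a genuinely different route from the paper's proof. The paper works with the derived equality condition \eqref{external_point_continuum_equal}: it places two spheres $\sphere(z_1,t_1), \sphere(z_2,t_2) \subset \cY$ so that $x,z_1,z_2$ are not collinear, and for each sphere uses antipodal pairs $y_-,y_+$ to force $x'$ onto every hyperplane containing the line $(xz_j)$, hence onto $(xz_j)$ itself; intersecting the two lines pins down $x'=x$. You instead work directly with the strict inequality \eqref{external_point_continuum}: one interior point $y_0$ off the line $(xx')$ and a single short vector $w$ produce a pair $y_\pm = y_0 \pm w/2 \in \cY$ whose bisector strictly separates $x$ from $x'$. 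Your approach is more elementary---it needs only one ball in $\cY$ and one witnessing pair rather than two spheres and a family of hyperplanes---while the paper's formulation has the minor advantage of isolating a more general sufficient condition (three spheres with non-collinear centers). Both arguments implicitly require $p \ge 2$; you are right that in dimension one the statement fails as written whenever $x,x'$ lie on the same side of $\cY$, a point the paper's proof also does not cover since ``non-collinear centers'' is vacuous there.
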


\begin{proof}
Take two weakly equivalent points, $x$ and $x'$.
By the fact that $\cY$ contains an open ball, it must also contain two open balls whose centers are not aligned with $x$, that is, there are $\cB_1 = \ball(z_1, t)$ and $\cB_2 = \ball(z_2, t)$, for some $z_1, z_2 \in \bbR^d$ and $t > 0$, such that $\cB_1, \cB_2 \subset \cY$ and $x, z_1, z_2$ are not collinear. 
We claim that $x' \in \cL_j := (xz_j)$ for $j = 1, 2$. If true, we can immediately conclude since $\cL_1 \cap \cL_2 = \{x\}$. 

We focus on proving that $x' \in \cL_1$. 
We use the fact that $\cL_1$ is the intersection of all the hyperplanes passing through $z_1$ with orthogonal direction perpendicular to $\cL_1$. Consider such a hyperplane $\cH$ with orthogonal direction given by the unit vector $v$. For $s \in \bbR$, define $y(s) = z_1 + s v$. Note that $y(s) \in \cY$ for any $s \in [-t, t]$, since in that case $y(s) \in \cB_1$. Also note that, by Pythagoras, $\|x-y(s)\|^2 = \|x-z_1\|^2 + s^2$, so that $\|x-y(s)\| < \|x-y(s')\|$ whenever $|s| < |s'|$. And by \eqref{external_point_continuum_weak} this implies that $\|x'-y(s)\| \le \|x'-y(s')\|$ whenever $|s| < |s'| < t$. Choosing $s_k = -t/2+1/k$ and $s'_k = t/2+1/k$, and letting $k\to\infty$, we get that $\|x'-y(-t/2)\| \le \|x'-y(t/2)\|$. And choosing $s_k = t/2-1/k$ and $s'_k = -t/2-1/k$, and letting $k\to\infty$, we get that $\|x'-y(t/2)\| \le \|x'-y(-t/2)\|$. We thus conclude that $\|x'-y(-t/2)\| = \|x'-y(t/2)\|$, which then implies that $x' \in \cH$ since $\hplane(y(-t/2), y(t/2)) = \cH$. 
Since this is true for any hyperplane $\cH$ that contains $\cL_1$, we have proved that $x' \in \cL_1$.
\end{proof}

\subsection{Discrete Asymptotic Setting}
\label{sec:external point asymptotic}

We now return to the discrete setting of \secref{external point discrete}, although in an asymptotic setting where the landmark points become dense in a suitable set. In doing so, we follow \citet{klein}, who also inspired our earlier work \cite{arias2017some}. 

Let $\cY_n := \{y_1, \dots, y_n\}$ denote the landmark points and $\cY_\infty := \{y_i: i \ge 1\}$. Guided by our analysis of the continuum model in \secref{external point continuum}, in particular \thmref{external_point}, our requirement below is that $\cY_\infty$ is dense somewhere,\footnote{This is the opposite of being {\em nowhere dense}, a well-known concept in topology.} by which we mean it is dense in some (non-empty) open set, or equivalently, its closure has non-empty interior.

\begin{theorem}
\label{thm:external point asymptotic}
In the present setting, suppose that $\cY_\infty$ is dense somewhere. For each $n$, let $(x_n, x'_n) \in \bbR^p \times \bbR^p$ be a pair of points that are equivalent with respect to $\cY_n$. Then any accumulation point of $\{(x_n, x'_n) : n \ge 1\}$ must be of the form $(x,x)$. In particular, if both $(x_n)$ and $(x'_n)$ converge, their limits must coincide. 
\end{theorem}

\begin{proof}
It suffices to consider the situation described at the end of the statement where both $(x_n)$ and $(x'_n)$ converge. We denote by $x$ and $x'$ their respective limits and turn to proving that $x = x'$. 

Let $\cY = \interior(\closure\, \cY_\infty)$. By our assumptions, $\cY$ is a non-empty open set and $\cY_\infty$ is dense in $\cY$.  \thmref{external_point}, it is enough to show that $x$ and $x'$ are weakly equivalent with respect to $\cY$. This results from a simple continuity argument. To be sure, take any $y, y' \in \cY$ such that $\|x-y\| < \|x-y'\|$. We need to prove that $\|x'-y\| \le \|x'-y'\|$. 
Take $0 < \delta < \frac14(\|x-y'\| - \|x-y\|)$.
Since $x_n \to x$ and $x'_n \to x'$, there is $n_0$ such that $\|x_n-x\| < \delta$ and $\|x'_n-x'\| < \delta$ for all $n \ge n_0$. 
Since $\cY_\infty$ is dense in $\cY$, there is $n \ge n_0$ and $n' \ge n_0$ such that $\|y_n-y\| < \delta$ and $\|y_{n'}-y'\| < \delta$. 
Let $n_1 = \max(n,n')$ and note that $\|x_{n_1}-x\| < \delta$ and $\|x'_{n_1}-x'\| < \delta$ since $n_1 \ge n_0$. 
By the triangle inequality,
\begin{align}
\|x_{n_1}-y_{n'}\| - \|x_{n_1}-y_n\| 
& \ge \|x-y'\| - \|x-y\| - 2 \|x_{n_1}-x\| - \|y_{n'}-y'\| - \|y_n-y\|,
\end{align}
so that 
\[
\|x_{n_1}-y_{n'}\| - \|x_{n_1}-y_n\| 
\ge \|x-y'\| - \|x-y\| - 4 \delta
> 0.
\]
Because $x_{n_1}$ and $x'_{n_1}$ are weakly equivalent with respect to $\cY_{n_1}$, this implies that
\[
\|x'_{n_1}-y_{n'}\| - \|x'_{n_1}-y_n\| \ge 0.
\]
By the triangle inequality,
\begin{align}
\|x'-y'\| - \|x'-y\| 
& \ge \|x'_{n_1}-y_{n'}\| - \|x'_{n_1}-y_n\| - 2 \|x'_{n_1}-x'\| - \|y_{n'}-y'\| - \|y_n-y\|,
\end{align}
so that
\begin{align}
\|x'-y'\| - \|x'-y\|
\ge - 4 \delta.
\end{align}
This being true for all sufficiently small $\delta>0$, we conclude. 
\end{proof}


\section{Multidimensional Scaling} 
\label{sec:mds}
In {\em multidimensional scaling (MDS)}, we have $n$ items, and the goal is to locate all them in space based on some pairwise dissimilarity information \cite{borg2005modern, cox2000multidimensional, young1987multidimensional}.  

We focus on the non-metric or ordinal variant of the problem, which has been of great interest in Psychometrics and beyond.
The body of work on this problem is substantial. Methodological work was pioneered by \citet{MR0173342,MR0140376} and \citet{kruskal1964nonmetric,kruskal1964multidimensional}, and has continued to this day \cite{terada2014local, mair2022more, anderton2019scaling, liu2016towards, agarwal2007generalized, van2012stochastic, tamuz2011adaptively}. 
Not much theoretical work is available. \citet{shepard1966metric} pioneered some theory, which was elaborated much more recently by \citet{klein} and ourselves \cite{arias2017some}. 
\citet{jain2016finite} consider a situation where the available information is noisy or imprecise and approach the problem via maximum likelihood. 

In our study of the problem below, we focus on what \citet{torgerson1952multidimensional} calls the (complete) method of triads, and what is referred to as triadic comparisons in \cite{DELEEUW1982285}. This model goes back to early work of \citet{stumpf1880} in the late 1800s. 

\subsection{Discrete Setting}
\label{sec:mds discrete}
In the usual setting, ordinal MDS can be described as follows: 
Given a set of row ranks $(r_{ij} : i, j \in [n])$, with each $(r_{i1}, \dots, r_{in})$ being a permutation of $(1, \dots, n)$, and a dimension $p \ge 1$, 
\begin{equation}
\label{mds_discrete}
\text{Find $x_1, \dots, x_n \in \bbR^p$ such that $\|x_i-x_j\| < \|x_i-x_k\|$ whenever $r_{ij} < r_{ik}$.}
\end{equation}
We note that any similarity transformation of a solution is also a solution.

In regards to the same foundational question of uniqueness (up to a similarity transformation), except for some earlier work in dimension $p=1$ \cite{suppes1955axiomatization, aumann1958coefficients}, the first meaningful contribution appears to be that of \citet{shepard1966metric}. To simplify matters, Shepard considered a continuous limit model, as is routinely done in Physics, for example. We describe such a model in the following subsection, and elaborate on his reasoning. This is the model that inspired our work.
Further progress was made decades later by \citet{klein}, who considered the finite sample situation described above and derived conditions under which, in the asymptotic limit $n\to\infty$ where the points fill a subset of $\bbR^p$, the solutions are constrained to be similitudes of each other. 
We later refined their results in \cite{arias2017some}.

\subsection{Continuous Setting}
\label{sec:mds continuum}
Following \citet{shepard1966metric}, we consider a limit model in the continuum where the set of items forms an uncountably infinite subset $\cX \subset \bbR^p$.

In the discrete setting, two configurations in dimension $p$, $\{x_1, \dots, x_n\}$ and $\{x'_1, \dots, x'_n\}$, are indistinguishable if it holds that
\begin{equation}
\label{mds_discrete_equiv}
\text{$\|x_i-x_j\| < \|x_i-x_k\| \iff \|x'_i-x'_j\| < \|x'_i-x'_k\|$, \quad for all $i,j,k \in [n]$.}
\end{equation}
In order to transition from the discrete setting to the continuous setting, we consider these configurations as being in correspondence via the function $x_i \mapsto x'_i$ defined on $\{x_1, \dots, x_n\}$. 
When considering equivalent configurations in the continuum, we are thus led to study injective functions $f : \cX \to \bbR^p$ satisfying 
\begin{equation}
\label{mds_continuum}
\text{$\|x-x'\| < \|x-x''\| \iff \|f(x)-f(x')\| < \|f(x)-f(x'')\|$, \quad for all $x, x', x'' \in \cX$.}
\end{equation}
We note that this property is equivalent to 
\begin{equation}
\label{mds_continuum_set}
f(\ball(x, \|x-x'\|) \cap \cX) = \ball(f(x), \|f(x)-f(x')\|)) \cap f(\cX), \quad \forall x,x' \in \cX. 
\end{equation}
We note that \eqref{mds_continuum} implies
\begin{equation}
\label{mds_continuum_equal}
\text{$\|x-x'\| = \|x-x''\| \iff \|f(x)-f(x')\| = \|f(x)-f(x'')\|$, \quad for all $x, x', x'' \in \cX$,}
\end{equation}
which is itself equivalent to
\begin{equation}\label{mds_continuum_equal_set}
f(\sphere(x, \|x-x'\|) \cap \cX) = \sphere(f(x), \|f(x)-f(x')\|)) \cap f(\cX), \quad \forall x,x' \in \cX.
\end{equation}

We first state the result in the same setting that \citet{shepard1966metric} considers, which crucially assumes that $f$ is a bijection of the entire space. We note that Shepard does that implicitly, even though being bijective is not an immediate consequence of being weakly isotonic.

\begin{proposition}
\label{prp:shepard}
Suppose that $\cX = \bbR^p$. Then any {\em bijective} function $f: \bbR^p \to \bbR^p$ satisfying \eqref{mds_continuum} must be a similarity transformation. 
\end{proposition}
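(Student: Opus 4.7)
The plan is to exploit the equidistance consequence \eqref{mds_continuum_equal} to show that $f$ sends every affine hyperplane to an affine hyperplane, then invoke the fundamental theorem of affine geometry to conclude that $f$ is affine, and finally use the sphere-preservation identity \eqref{mds_continuum_equal_set} to upgrade affinity to a similarity. I will assume $p \ge 2$ throughout; the case $p=1$ can be treated separately since weak isotonicity already forces $f$ to be strictly monotone and hence, together with bijectivity, of the form $x \mapsto \pm s x + b$.

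First, for any distinct $x', x'' \in \bbR^p$, the perpendicular bisector is $H(x', x'') = \{x : \|x-x'\| = \|x-x''\|\}$. Reading \eqref{mds_continuum_equal} with $x'$ and $x''$ held fixed, and using that $f$ is a bijection of $\bbR^p$, I would deduce $f(H(x', x'')) = H(f(x'), f(x''))$. Since every affine hyperplane arises as the perpendicular bisector of some pair of points (take any two points symmetric across it), $f$ bijects the family of affine hyperplanes onto itself. Taking intersections of hyperplanes, $f$ then maps affine subspaces of every dimension to affine subspaces of the same dimension; in particular, $f$ is a collineation.

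Next, the fundamental theorem of affine geometry asserts that any bijective collineation of $\bbR^p$ with $p\geq 2$ is a semi-affine transformation, i.e.\ $f(x) = A\sigma(x) + b$ for some $A \in \mathrm{GL}_p(\bbR)$, $b \in \bbR^p$, and some field automorphism $\sigma$ of $\bbR$ applied coordinate-wise. Since $\bbR$ admits no nontrivial field automorphisms, $\sigma$ is the identity and $f(x) = Ax + b$. Finally, applying \eqref{mds_continuum_equal_set} to the unit sphere at the origin and using bijectivity yields $A\,\bbS^{p-1} = \sphere(0, s)$ for some $s > 0$, so $\|Au\| = s$ for every unit vector $u$. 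This forces $A = sR$ for some orthogonal $R$, so $f$ is a similarity.

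The main obstacle is the invocation of the fundamental theorem of affine geometry in the middle step. A self-contained alternative would be to first establish continuity of $f$ (for instance, via \eqref{mds_continuum_equal_set}, which forces $f$ to map small balls to small balls), then show that $f$ preserves midpoints using \eqref{mds_continuum_equal} (the midpoint of $x'$ and $x''$ is the unique point of $H(x', x'')$ on the line joining them, and both structures are preserved by $f$), and finally upgrade midpoint-preservation plus continuity to affinity by a standard Cauchy-type argument line by line.
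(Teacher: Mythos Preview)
Your argument is correct and takes a genuinely different route from the one the paper records. The paper follows Shepard: from \eqref{mds_continuum_equal_set} and surjectivity one obtains \eqref{f_shepard}, so $f$ is a sphere-preserving bijection of $\bbR^p$; a classical result (cited from Coxeter) then forces $f$ to be either a similarity or an inversive transformation, and inversions are excluded in a separate step because they violate the rank-order and equality constraints among concentric and nonconcentric spheres. Your route instead reads off hyperplane preservation from the perpendicular-bisector identity $f(H(x',x'')) = H(f(x'),f(x''))$, deduces collinearity preservation, invokes the fundamental theorem of affine geometry (together with the triviality of $\mathrm{Aut}(\bbR)$) to obtain affinity directly, and then uses a single sphere image to pin down the linear part as a scaled orthogonal map. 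This trades one external classical result for another; the payoff is that you never need to rule out inversions as a separate case. Your proposed self-contained alternative---continuity, then midpoint preservation via \eqref{mds_continuum_equal}, then a Cauchy-type argument---is essentially the strategy the paper adopts (via \cite{klein,arias2017some}) for the more general \thmref{mds}, so pursuing that variant would bring you closer to, not further from, the paper's treatment.
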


When $f$ is surjective, \eqref{mds_continuum_equal_set} takes the form
\begin{equation}\label{f_shepard}
f(\sphere(x, \|x-x'\|)) = \sphere(f(x), \|f(x)-f(x')\|), \quad \forall x,x' \in \bbR^p.
\end{equation}
This implies that $f$ transforms any sphere into a sphere.
\citet{shepard1966metric} argues from there that ``every sphere-preserving transformation is either a similarity transformation or the product of an inversion (in a sphere) and an isometry", citing\footnote{We note that, strictly speaking, the result is stated for dimension three, and earlier in the same book, for dimension two. These are Theorems~6.71 and~7.71 in the 1969 edition of the book.} \cite[p 104]{coxeter1961introduction}.
He then goes on to say that ``The possibility of an inversive transformation can immediately be ruled out, however. It preserves neither the rank order of concentric spheres nor the equality of nonconcentric spheres, whereas both of these invariances are required by the given rank order of the interpoint distances." 

\medskip
We let Shepard's arguments stand on their own, and turn our attention to establishing a more general result using a different approach, following \cite{arias2017some} instead.
It turns out that all that is needed of $f$ is that it be {\em weakly isotonic} in the sense that
\begin{equation}
\label{weakly_isotonic}
\text{$\|x-x'\| < \|x-x''\| \implies \|f(x)-f(x')\| \le \|f(x)-f(x'')\|$, \quad for all $x, x', x'' \in \cX$.}
\end{equation}
(This is the definition given in \cite{arias2017some}. A slightly stronger notion is used in \cite{klein}, where the inequality on the right-hand side is strict.)
The following result extends existing ones in \cite{klein, arias2017some} to situations where $\cX$ is not necessarily open or connected.

\begin{theorem}
\label{thm:mds}
Suppose that $\interior\,\cX \ne \emptyset$. Then a weakly isotonic function on $\cX$ must be a similarity transformation.\footnote{When we say that a function $f : \cX \subset \bbR^d \to \bbR^d$ is a similarity transformation, we mean that it coincides on $\cX$ with a similarity transformation defined on the entirety of $\bbR^d$.}
\end{theorem}

\begin{proof}
If $\cX$ is an open ball, then the result is a special case of \cite[Prop~7]{klein} or \cite[Th~1]{arias2017some}.

Now, to prove the theorem as stated, let $\cB$ be any open ball contained in $\cX$ and let $f$ be weakly isotonic on $\cX$.
We know that $f$ coincides on $\cB$ with a similarity transformation. Without loss of generality, we may assume this similarity transformation to be the identity function, in which case $f(x) = x$ for all $x \in \cB$. 
In particular, $f(\cB) = \cB$, and via \eqref{weakly_isotonic}, we have, for any $x \in \cX$,
\begin{equation}
\text{$\|x-x'\| < \|x-x''\| \implies \|f(x)-x'\| \le \|f(x)-x''\|$, \quad for all $x', x'' \in \cB$.}
\end{equation}
Therefore, $x$ and $f(x)$ are weakly equivalent with respect to $\cB$ in the sense of \eqref{external_point_continuum_weak}, and by \thmref{external_point}, this implies that $f(x) = x$. And this is true for any $x \in \cX$, so that $f$ coincides with the identity function on the entirety of $\cX$.
\end{proof}
%

\subsection{Discrete Asymptotic Setting}
\label{sec:mds point asymptotic}

We return to the discrete setting of \secref{mds discrete}. Specifically, we consider an asymptotic setting where there is a set of points satisfying the ordinal information provided, meaning satisfying \eqref{mds_discrete}, that becomes dense in a suitable domain. This is the framework of \cite{klein, arias2017some}. By leveraging our analysis of the continuum model, we are able to operate under weaker assumptions compared to the work there: We do away with the openness and connectedness assumptions. (The boundedness assumption is inconsequential.) 

\begin{theorem}
\label{thm:mds asymptotic}
Let $(x_n)$ and $(x'_n)$ be two bounded sequences of distinct points in $\bbR^d$ such that, for each $n$, $\{x_1, \dots, x_n\}$ and $\{x'_1, \dots, x'_n\}$ are equivalent configurations in the sense of \eqref{mds_discrete_equiv}. In addition, assume that $(x_n)$ is dense somewhere and that both sequences are bounded.   
For each $n$, let $f_n : \{x_i: i \ge 1\} \to \{x'_i: i \ge 1\}$ such that $f_n(x_i) = x'_i$ for $i \in [n]$. Then $(f_n)$ is sequentially compact for the pointwise convergence topology and all the functions where it accumulates are similarity transformations.
\end{theorem}

\begin{proof}
The fact that $(f_n)$ is sequentially compact for the pointwise convergence topology is a standard result in topology that relies on the so-called {\em diagonal process}. See \cite[Lemma 2]{arias2017some}. 

To establish the second part of the statement, it suffices to consider the situation where $(f_n)$ converges pointwise on $\cX_\infty := \{x_i : i \ge 1\}$. Let $f$ denote the limit, so that $\lim_n f_n(x_i) = f(x_i)$ for all $i \ge 1$. It is clear that $f$ is weakly isotonic on $\cX_\infty$. Let $\cB$ be an open ball where $\cX_\infty$ is dense. By \cite[Lemma 4]{arias2017some}, $f$ is uniformly continuous on $\cB \cap \cX_\infty$, and therefore admits a unique continuous function on $\cB$, also denoted by $f$. This extension is weakly isotonic on $\cX := \cB \cup \cX_\infty$, so that by \thmref{mds}, $f$ coincides on $\cX$ --- and therefore on $\cX_\infty$ --- with a similarity transformation. 
\end{proof}

\begin{remark}
Uniform convergence are established in \cite{klein, arias2017some}, but under the assumption that the limiting set be included and dense in some connected open set. We believe that, with (much) more work, a uniform rate of convergence can be obtained in the more general setting considered here. 
\end{remark}

\section{Internal Unfolding} 
\label{sec:internal}

In {\em internal unfolding}, we have $m$ individuals expressing preferences for $n$ objects, and the goal is to locate the $m$ individuals and the $n$ objects in space. Unlike in external unfolding (\secref{external}), the location of the objects is unknown.
The origins of internal unfolding in the Psychometrics literature date back to \citet{coombs1950psychological}, at least for the point model. 
The problem is also known as `multidimensional unfolding', or just `unfolding'  \cite{borg2005modern}.  

We consider non-metric or ordinal variant of the problem, with a focus on the {\em conditional} setting where each individual ranks the objects in order of preference without first agreeing with other individuals on some ordinal scale. This corresponds to the method of triads. 



\subsection{Discrete Setting}
\label{sec:internal point discrete}
The practitioner adopting the point model is confronted with the following problem: 
Given a set of row ranks $(r_{ik} : i \in [m], k \in [n])$, with each $(r_{i1}, \dots, r_{in})$ being a permutation of $(1, \dots, n)$, and a dimension $p \ge 1$, 
\begin{equation}\label{internal point problem}
\text{Find $x_1, \dots, x_m \in \bbR^p$ and $y_1, \dots, y_n \in \bbR^p$ such that $\|x_i-y_k\| < \|x_i-y_l\|$ whenever $r_{ik} < r_{il}$.}
\end{equation}
We note that any similarity transformation of a solution --- applied to both the individuals and the objects --- is also a solution.

\citet{bennett1960multidimensional} propose three methods for determining the smallest dimension where an exact embedding can be realized, and in the process offer some elementary observations on things like the number of isotonic regions (aka Voronoi cells).
A follow-up paper by the same authors \cite{hays1961multidimensional} considers extending the basic approach developed by \citet{coombs1950psychological} for the case of dimension $p=1$ to general $p > 1$ by studying the order of individuals when projected onto lines. 
This combinatorial and basic geometrical work was developed further by \citet{davidson1973geometrical, davidson1972geometrical}.
Beyond that, the only theoretical results we are aware of pertain to the study of degenerate solutions \cite{busing2005avoiding, davidson1973geometrical, de1983}.

\subsection{Continuous Setting}
\label{sec:internal point continuum}
Again inspired by \citet{shepard1966metric}, we consider a limit model in the continuum where the number of individuals and the number of objects are both infinite, represented by subsets $\cX \subset \bbR^p$ and $\cY \subset \bbR^p$, respectively. 

Staying with the discrete model for a moment, two configurations in dimension $p$, one of them $\{x_1, \dots, x_m; y_1, \dots, y_n\}$ and the other $\{x'_1, \dots, x'_m; y'_1, \dots, y'_n\}$, are indistinguishable if it holds that
\begin{equation}
\label{internal_equivalent_point_discrete}
\text{$\|x_i-y_k\| < \|x_i-y_l\| \iff \|x'_i-y'_k\| < \|x'_i-y'_l\|$, \quad for all $(i,k,l) \in [m] \times [n] \times [n]$.}
\end{equation}
In preparation to pass to the continuum, we regard these configurations as being in correspondence via the pair of functions $x_i \mapsto x'_i$ and $y_k \mapsto y'_k$, defined on $\{x_1, \dots, x_m\}$ and $\{y_1, \dots, y_n\}$, respectively. 
When considering equivalent configurations in the continuum, we are thus led to study pairs of injective functions $f : \cX \to \bbR^p$ and $g: \cY \to \bbR^p$ satisfying 
\begin{equation}
\label{internal_point_continuum}
\text{$\|x-y\| < \|x-y'\| \iff \|f(x)-g(y)\| < \|f(x)-g(y')\|$, \quad for all $(x, y, y') \in \cX \times \cY \times \cY$.}
\end{equation}
This is equivalent to 
\begin{equation}
\label{internal_point_continuum_set}
g(\ball(x, \|x-y\|) \cap \cY) = \ball(f(x), \|f(x)-g(y)\|)) \cap g(\cY), \quad \text{for all }(x,y) \in \cX \times \cY. 
\end{equation}
We note that \eqref{internal_point_continuum} implies
\begin{equation}
\label{internal_point_continuum_equal}
\text{$\|x-y\| = \|x-y'\| \iff \|f(x)-g(y)\| = \|f(x)-g(y')\|$, \quad for all $(x, y, y') \in \cX \times \cY \times \cY$,}
\end{equation}
which is itself equivalent to
\begin{equation}\label{internal_point_continuum_equal_set}
g(\sphere(x, \|x-y\|) \cap \cY) = \sphere(f(x), \|f(x)-g(y)\|)) \cap g(\cY), \quad \forall (x,y) \in \cX \times \cY.
\end{equation}

We first establish the result in the setting that we believe \citet{shepard1966metric} would have considered.

\begin{proposition}
\label{prp:shepard_internal}
In the situation where $\cX = \cY = \bbR^p$, consider any pair of injective functions $(f,g)$ satisfying \eqref{internal_point_continuum} such that $g(\bbR^p) = \bbR^p$. Then $f = g = L$ for some similarity transformation $L$. 
\end{proposition}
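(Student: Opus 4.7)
The plan is to reduce this internal unfolding problem in the continuum to the MDS setting already treated in Proposition~\ref{prp:shepard}, by first identifying the two maps $f$ and $g$.

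First I would exploit the surjectivity of $g$ by plugging $y = x$ into the defining equivalence \eqref{internal_point_continuum}. Since $\cY = \bbR^p$, the choice $y = x \in \bbR^p$ is admissible, and $\|x-x\| = 0 < \|x-y'\|$ holds for every $y' \neq x$. The equivalence then forces $\|f(x) - g(x)\| < \|f(x) - g(y')\|$ for all $y' \neq x$, so that $g(x)$ is the unique minimizer of $y' \mapsto \|f(x) - g(y')\|$. Because $g$ is a bijection of $\bbR^p$ onto itself, the infimum of this quantity is $0$, attained exactly when $g(y') = f(x)$. Combined with injectivity of $g$, this forces $g(x) = f(x)$ for every $x \in \bbR^p$.

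Once $f = g$ is established globally, the hypothesis \eqref{internal_point_continuum} reads
\begin{equation*}
\|x-y\| < \|x-y'\| \iff \|f(x)-f(y)\| < \|f(x)-f(y')\|, \quad \text{for all } x, y, y' \in \bbR^p,
\end{equation*}
which is precisely the weak isotonic property of $f$ on $\bbR^p$. Moreover, $f = g$ inherits bijectivity from $g$. We are therefore in the hypotheses of Proposition~\ref{prp:shepard}, which yields that $f$ is a similarity transformation $L$, and then $g = f = L$ as claimed.

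There is no real obstacle here, since the argument is essentially a one-line reduction; the only place requiring care is the step $f(x) = g(x)$, where both the surjectivity of $g$ (to ensure the minimum of $\|f(x)-g(y')\|$ is genuinely $0$) and its injectivity (to ensure uniqueness of the preimage of $f(x)$) are used. It is worth noting that surjectivity of $g$ alone suffices for the conclusion --- no separate surjectivity assumption on $f$ is needed, as it is produced automatically by the identification $f = g$.
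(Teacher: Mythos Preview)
Your argument is correct and considerably simpler than the paper's. The paper proceeds geometrically: it first proves a collinearity lemma (\lemref{fffggg}) showing that whenever $x,x',x'',y,y',y''$ are collinear, so are their images under $f$ and $g$; this relies on the observation that tangent spheres are mapped to tangent spheres via \eqref{g_shepard}. It then fixes $z_0$, picks two diameters of a sphere centered at $z_0$, and uses the lemma to argue that both $f(z_0)$ and $g(z_0)$ lie at the intersection of the image diameters, forcing $f(z_0) = g(z_0)$. Only then does it invoke \thmref{mds}.

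Your route bypasses all of this by exploiting the special feature $\cX = \cY$: plugging $y = x$ into \eqref{internal_point_continuum} immediately singles out $g(x)$ as the unique nearest point to $f(x)$ in $g(\bbR^p) = \bbR^p$, which must therefore be $f(x)$ itself. (A small remark: injectivity of $g$ is not actually needed in this step --- surjectivity alone gives the contradiction if $g(x) \ne f(x)$.) The paper's approach is more in keeping with the geometric machinery developed later for \thmref{internal_point}, where $\cX$ and $\cY$ need not coincide and your substitution $y = x$ is unavailable; but for the proposition as stated, your argument is the cleaner one.
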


With the additional assumption that $g(\bbR^p) = \bbR^p$, meaning that $g$ is not only injective but also surjective, \eqref{internal_point_continuum_equal_set} becomes
\begin{equation}\label{g_shepard}
g(\sphere(x, \|x-y\|)) = \sphere(f(x), \|f(x)-g(y)\|)), \quad \forall (x,y) \in \bbR^p \times \bbR^p.
\end{equation}

\begin{lemma} \label{lem:fffggg}
Consider any pair of injective functions $(f,g)$ satisfying \eqref{g_shepard}. If $x, x', x'' \in \cX$ and $y, y', y'' \in \cY$ are all collinear, then so are $f(x), f(x'), f(x''), g(y), g(y'), g(y'')$.
\end{lemma}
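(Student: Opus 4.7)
The plan is to exploit sphere tangency, which passes through $g$ because $g$ is a sphere-preserving injection: if two spheres $\sphere(x, r)$ and $\sphere(y_0, s)$ are tangent at a common point $y$ (i.e., they meet at exactly one point), then by \eqref{g_shepard} the images are spheres centered at $f(x)$ and $f(y_0)$ respectively, and by injectivity of $g$ they again meet at exactly one point, namely $g(y)$. Since two tangent spheres touch on the line joining their centers, $g(y)$ lies on the line through $f(x)$ and $f(y_0)$. Now, any three collinear points $x, y_0, z$ (with $y_0, z \ne x$) automatically give such tangency: the spheres $\sphere(x, \|x-z\|)$ and $\sphere(y_0, \|y_0 - z\|)$ meet only at $z$ precisely because $z$ lies on the line through the centers. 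Hence collinearity of $x, y_0, z$ forces collinearity of $f(x), f(y_0), g(z)$.

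Assume the six points of the lemma are pairwise distinct (in the degenerate case there is simply less to check). Pick $x_0 \in L \cap \cX$ and $y_0 \in L \cap \cY$ with $x_0 \ne y_0$, and let $L'$ be the line through $f(x_0)$ and $f(y_0)$ (distinct by injectivity of $f$). Applying the transfer principle above to each $y \in \{y_1, y_2, y_3\}$, which is collinear with $x_0$ and $y_0$, we immediately get $g(y) \in L'$, so $g(y_1), g(y_2), g(y_3) \in L'$.

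For each $x \in \{x_1, x_2, x_3\} \setminus \{x_0\}$ we must still show $f(x) \in L'$. Pick an observer $y_0 \in \{y_1, y_2, y_3\}$ with $y_0 \ne x$ and let $y^* := 2x - y_0$, the antipode of $y_0$ on $\sphere(x, \|x - y_0\|)$. As an affine combination of two points of $L$, $y^*$ lies in $L$, so $g(y^*) \in L'$ by the previous step. Applying the transfer principle to the collinear triple $(x, y_0, y^*)$ puts $g(y^*)$ on the line through $f(x)$ and $f(y_0)$. Both $g(y^*)$ and $f(y_0)$ lie in $L'$, and they are distinct --- I will appeal to the auxiliary fact $g(a) \ne f(b)$ whenever $a \ne b$, which follows because $f(b) = g(a)$ would force the image of $\sphere(b, \|b-a\|)$ under $g$ to collapse to $\{f(b)\}$, contradicting the injectivity of $g$. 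Hence the line through $f(x)$ and $f(y_0)$ coincides with $L'$, placing $f(x) \in L'$.

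The main obstacle is the tangency transfer itself: recognizing that collinearity on $L$ is equivalent to a sphere tangency in the domain, and that this tangency is preserved by $g$ thanks to \eqref{g_shepard} combined with injectivity. The remainder is bookkeeping, and minor coincidences (e.g., $y^* = x_0$) can be sidestepped by permuting the observer $y_0$ among the three available $y_j$, which is always possible under the pairwise-distinctness assumption.
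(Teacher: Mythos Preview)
Your core idea --- collinearity on $L$ makes two spheres centered at two of the points and passing through the third tangent, and \eqref{g_shepard} plus injectivity of $g$ transfers this tangency to the image, forcing the two $f$-centers and the $g$-image of the contact point to be collinear --- is exactly what the paper uses. The paper's organization is simpler, though: it always takes both centers from $\{x,x',x''\}$, so the target line is $L' := (f(x)\,f(x'))$. Then the triples $(x,x',y)$, $(x,x',y')$, $(x,x',y'')$ place all three $g$-images on $L'$, and $(x,x'',y)$ places $f(x'')$ on the line through $f(x)$ and $g(y)$, which equals $L'$ since $f(x)\neq g(y)$ (the same auxiliary fact you isolate).

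Your route --- anchoring $L'$ on $f(x_0)$ and $f(y_0)$, feeding a $y$-point through $f$ (legitimate since $\cX=\bbR^p$) --- works in principle but is more roundabout and leaves two loose ends. First, you reuse the symbol $y_0$: in step~1 it is the anchor of $L'$, while in step~2 you ``pick an observer $y_0\in\{y_1,y_2,y_3\}$''. Step~2 needs $f(y_0)\in L'$, which is guaranteed only when this observer coincides with the original anchor, so your proposed fix of permuting $y_0$ to dodge the coincidence $y^*=x_0$ is not actually available without redoing the construction of $L'$. Second, if the anchor $y_0$ is one of the $y_j$, then in step~1 the transfer on $(x_0,y_0,y_0)$ is degenerate and does \emph{not} yield $g(y_0)\in L'$; one more application (e.g., on $(x_0,x_2,y_0)$ after step~2 has put $f(x_2)\in L'$) is needed to close this. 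Both points are easily repaired, but anchoring on two $x$-points as the paper does sidesteps them entirely and makes the antipode $y^*$ construction unnecessary.
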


\begin{proof}
Assume without loss of generality that the points are all distinct. 

We first show that $f(x), f(x'), g(y)$ are collinear.
Indeed, since $x,x',y$ are collinear, and we just assumed that $x \ne x'$, it must be that $y$ is the only point at the intersection of $\cS := \sphere(x, \|x-y\|)$ and $\cS' := \sphere(x', \|x'-y\|)$.
Now, by \eqref{g_shepard}, 
\begin{align}
g(\cS) = \sphere(f(x), \|f(x)-g(y)\|), && g(\cS') = \sphere(f(x'), \|f(x')-g(y)\|),
\end{align}
and, by the fact that $g$ is injective, these two spheres only have one point in common, $g(y)$, and so they must be tangent as well. This then implies that their centers, $f(x)$ and $f(x')$, are collinear with their point of contact, $g(y)$. 

By the same token, $f(x), f(x'), g(y')$, $f(x), f(x'), g(y'')$, and also $f(x), f(x''), g(y)$, must be collinear. And from all this, we are able to conclude.
\end{proof}

\begin{proof}[Proof of \prpref{shepard_internal}]
It suffices to show that $f$ and $g$ coincide, as we can then deduce from \eqref{internal_point_continuum} that $f$ is weakly isotonic, and is therefore a similarity transformation via \thmref{mds}. 

Take any $z_0$. We want to show that $f(z_0) = g(z_0)$.
Consider $z_1,z'_1, z_2,z'_2$ on some sphere $\cS$ centered at $z_0$ such that $(z_1z'_1)$ and $ (z_2z'_2)$ intersect at $z_0$, so that $[z_1z'_1]$ and $[z_2z'_2]$ are diameters of the sphere. 
By \eqref{g_shepard}, $g(\cS)$ is a sphere centered at $f(z_0)$ and passing through $g(z_1),g(z'_1),g(z_2),g(z'_2)$. 
And, by \lemref{fffggg}, $f(z_0),g(z_1),g(z'_1)$ are collinear, and so are $f(z_0),g(z_2),g(z'_2)$, implying that $[g(z_1)g(z'_1)]$ and $[g(z_2)g(z'_2)]$ are diameters of $g(\cS)$. By the fact that $g$ is injective, we have that $g(z_1),g(z'_1),g(z_2),g(z'_2)$ are distinct, so that $f(z_0)$ is the only point at the intersection of the lines $(g(z_1)g(z'_1))$ and $(g(z_2)g(z'_2))$. However, \lemref{fffggg} also gives that $g(z_0),g(z_1),g(z'_1)$ are collinear, and that $g(z_0),g(z_2),g(z'_2)$ are collinear, implying in the same way that $g(z_0)$ is also at the intersection of these two lines, forcing $g(z_0) = f(z_0)$.
\end{proof}


It is possible to weaken the assumptions substantially. 
The following result generalizes \prpref{shepard_internal}.
It relies on the following weaker variant of \eqref{internal_point_continuum},
\begin{equation}
\label{internal_point_continuum_weak}
\text{$\|x-y\| < \|x-y'\| \implies \|f(x)-g(y)\| \le \|f(x)-g(y')\|$, \quad for all $(x, y, y') \in \cX \times \cY \times \cY$.}
\end{equation}

\begin{proposition}
\label{prp:gBopen}
Suppose $\interior(\cX \cap \cY) \ne \emptyset$, and consider any pair of injective functions $(f,g)$ satisfying \eqref{internal_point_continuum_weak}. Assume that
\begin{equation}
\label{gBopen}
\text{There is $\cB \subset \cX \cap \cY$ open such that either $g(\cB)$ is open, or $g(\cY)$ is dense in $f(\cB)$.}
\end{equation}
Then $f$ coincides on $\cX$ with a similarity transformation. If in addition either $\cX$ contains the convex hull of $\cY$, or $\cY$ is convex, then $f$ and $g$ coincide with the same similarity transformation on $\cX$ and $\cY$, respectively. 
\end{proposition}

\begin{proof}
In view of \lemref{ball}, it suffices to show that $f = g$ on $\cB$.
Assume for contradiction that there is $x \in \cB$ such that $f(x) \ne g(x)$. By \eqref{internal_point_continuum_weak}, 
\begin{equation}
\label{gBopen proof 1}
0< r := \|f(x)-g(x)\| \le \|f(x)-g(y)\|, \quad \forall y \in \cY.
\end{equation}
In particular, this prevents $g(\cY)$ from being dense in $f(\cB)$ since \eqref{gBopen proof 1} implies that $B(f(x), r) \cap g(\cY) = \emptyset$ . 
It also implies that no ball centered at $g(x)$ can be contained inside $g(\cB)$, preventing $g(\cB)$ from being open.
\end{proof}

\begin{lemma}
\label{lem:ball}
Consider any pair of injective functions $(f,g)$ satisfying \eqref{internal_point_continuum_weak} and such that $f = g$ on a (non-empty) open ball contained in $\cX \cap \cY$. Then $f$ coincides on $\cX$ with a similarity transformation. If in addition either $\cX$ contains the convex hull of $\cY$, or $\cY$ is convex, then $f$ and $g$ with the same similarity transformation on $\cX$ and $\cY$, respectively. 
\end{lemma}

\begin{proof}
Let $\cB$ denote that open ball, which we assume to be the unit ball without loss of generality. 

By way of \eqref{internal_point_continuum_weak}, we have that $f$ is weakly isotonic on $\cB$ in the sense of \eqref{weakly_isotonic}. Therefore, by \thmref{mds}, $f$ coincides on $\cB$ with a similarity transformation, which we take to be the identity without loss of generality so that $f(x) = x$ for all $x \in \cB$.

Now, take any $x\in\cX$. 
By \eqref{internal_point_continuum_weak} and the fact that $f = g = {\rm id}$ on $\cB$,   
\[\|x-y\| < \|x-y'\| \implies \|f(x)-y\| \le \|f(x)-y'\|, \quad \forall y,y' \in \cB,\]
so that $x$ and $f(x)$ are weakly equivalent with respect to $\cB$ in the sense of \eqref{external_point_continuum_weak}. We may thus apply \thmref{external_point} to obtain that $x$ and $f(x)$ coincide. This being true for any $x \in \cX$, we have established the first part of the statement, that $f$ coincides on $\cX$ with a similarity. 

We proceed and continue to assume that $f(x) = x$ for all $x \in \cX$. We need to prove that $g(y) = y$ for all $y \in \cY$. At this point, we know that $g$ satisfies
\begin{equation}
\label{ball_proof1}
\begin{cases}
\|x-y\| < \|x-y'\| &\Rightarrow \|x-g(y)\| \le \|x-y'\|, \\
\|x-y\| > \|x-y'\| &\Rightarrow \|x-g(y)\| \ge \|x-y'\|,
\end{cases}
\quad \text{for all $x,y,y' \in \cX \times \cY \times \cB$.}
\end{equation}

• {\em Suppose $\cX$ contains the convex hull of $\cY$.}
Take any $y \in \cY$ not in $\cB$, for otherwise we already know that $g(y) = y$. 
Let $u := y/\|y\|$, and for $s \in (-1,1)$, define $y(s) := s u$ and $x(s) = \frac12(y(s)+y)$. The rationale for the notation is that, in the range of $s$ considered, $y(s) \in \cB$, and by the fact that $\cX$ contains the convex hull of $\cY$, $x(s) \in \cX$. Let $r(s) = \|x(s) - y\| = \frac12(\|y\|-s)$. Noting that $y$ is the only point such that $\|x(-1/2) - y\| = r(-1/2)$ and $\|x(1/2) - y\| = r(1/2)$, it suffices to show that $\|x(-1/2) - g(y)\| = r(-1/2)$ and $\|x(1/2) - g(y)\| = r(1/2)$. We focus on the latter.
First, when $s < 1/2$, we have $\|x(1/2) - y\| < \|x(1/2) - y(s)\|$, and an application of \eqref{ball_proof1} gives $\|x(1/2) - g(y)\| \le \|x(1/2) - y(s)\|$. Letting $s \nearrow 1/2$ gives $\|x(1/2) - g(y)\| \le r(1/2)$. Similarly, when $s > 1/2$, we have $\|x(1/2) - y\| > \|x(1/2) - y(s)\|$, and an application of \eqref{ball_proof1} gives $\|x(1/2) - g(y)\| \ge \|x(1/2) - y(s)\|$. Letting $s \searrow 1/2$ gives $\|x(1/2) - g(y)\| \ge r(1/2)$. 

• {\em Suppose $\cY$ is convex.}
We proceed by induction. Suppose we know that $g = {\rm id}$ on $\cB_m \cap \cY$  for some $m \ge 1$ integer, where $\cB_m := \ball(0, m)$. Note that this is true for $m = 1$ by assumption, since $\cB_1 = \cB$. Take any $y \in (\cB_{m+1}\setminus \cB_m) \cap \cY$.
We use the same notation: $u := y/\|y\|$, and for $s \in \bbR$, $y(s) := s u$, $x(s) = \frac12(y(s)+y)$, and $r(s) = \|x(s) - y\|$.
In particular, $y = y(m+a)$ for some $0 \le a < 1$.
We consider $s \in (-m, m)$ to ensure that $y(s) \in \cB_m$, and also that $y(s) \in \cY$ by the fact that $y(s) \in [0y]$ and $\cY$ is convex. (The origin, 0, is in $\cY$ since $0 \in \cB \subset \cY$.)  
By our induction hypothesis, we thus have $g(y(s)) = y(s)$.
We further restrict $s$ to be in the range $-m < s < 2-a-m$, to also ensure that $x(s) \in \cB$, so that $x(s) \in \cX$. The arguments are now the same and center on the fact that $y$ is the only point such that $\|x(1/2-m) - y\| = r(1/2-m)$ and $\|x(1-m) - y\| = r(1-m)$.
\end{proof}

Although \prpref{gBopen} generalizes \prpref{shepard_internal}, it remains somewhat unsatisfactory as it still makes an assumption on the alternative configuration $f(\cX) \times g(\cY)$ by way of \eqref{gBopen}. It turns out that this assumption is not needed, although its absence makes the situation substantially more complicated. 

\begin{theorem} \label{thm:internal_point}
Suppose $\interior(\cX \cap \cY) \ne \emptyset$, and consider any pair of injective functions $(f,g)$ satisfying \eqref{internal_point_continuum}.
Then $f$ coincides on $\cX$ with a similarity transformation. If in addition either $\cX$ contains the convex hull of $\cY$, or $\cY$ is convex, then $f$ and $g$ coincide on $\cY$. 
\end{theorem}

The proof occupies the rest of the section. Until the end of the proof, $(f,g)$ denotes a pair of injective functions satisfying \eqref{internal_point_continuum}. In view of \lemref{ball}, it suffices to consider a situation where $\cX = \cY$ is an open ball, which we denote $\cB$ henceforth, and to prove that $f$ and $g$ coincide on $\cB$.

For a subset $\cS \subset \bbR^p$ we define $\dim \cS$ to be the dimension of the affine space spanned by $\cS$, denoted $\Span \cS$. We denote by $\dir \cS$ the direction of $\Span\cS$ and by $\Vect \cS$ the vector space spanned by $\cS$. In particular, for any $s \in \cS$, we have $\dir \cS = \Vect(\cS-s)$ and $\Span\cS = s + \dir\cS$. We say that two subsets $\cS$ and $\cS'$ are parallel if $\dir \cS \subset \dir \cS'$ or $\dir \cS' \subset \dir \cS$.

We already saw that any pair of functions $(f,g)$ satisfying \eqref{internal_point_continuum} also satisfies \eqref{internal_point_continuum_set}, which here takes the form 
\begin{equation} \label{easy-iii}
g(\ball(x,\|y-x\|)\cap \cB) = \ball(f(x),\|g(y) - f(x)\|) \cap g(\cB), \quad \forall x,y \in \cB.
\end{equation}
It is also the case that 
\begin{equation} \label{easy-i}
f(\hplane^+(y,y') \cap \cB) = \hplane^+(g(y),g(y')) \cap f(\cB), \quad \forall y \ne y' \in \cB;
\end{equation}
\begin{equation} \label{easy-ii}
f(\hplane(y,y') \cap \cB) = \hplane(g(y),g(y')) \cap f(\cB), \quad \forall y \ne y' \in \cB.
\end{equation}
Recall that $\hplane(y,y')$ is the hyperplane going through the midpoint of, and orthogonal to the line segment $[yy']$, while $\hplane^+(y,y')$ is the half-space with boundary $\hplane(y,y')$ and containing $y$.

\begin{lemma} \label{lem:fdimb} 
For any subset $\cS \subset \cB$, $\dim f(\cS) = \dim \cS$.
\end{lemma}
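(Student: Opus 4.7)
The central tool is \eqref{easy-ii}, which associates to each hyperplane $\cH$ of $\bbR^p$ meeting $\cB$ an image hyperplane $\cH^* := H(g(y), g(y'))$, where $y, y' \in \cB$ is any pair with $\cH = H(y, y')$ (such a pair exists because $\cB$ is open). I plan to prove by induction on $m \le p$ the following \emph{structural claim}: if $\cH_1, \ldots, \cH_m$ are distinct hyperplanes of $\bbR^p$ with linearly independent normals and common intersection meeting $\cB$, then $\cH_1^*, \ldots, \cH_m^*$ are also distinct with linearly independent normals, so $\dim \bigcap_{i=1}^m \cH_i^* = p - m$. The base case $m = 1$ is immediate from injectivity of $g$. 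For the inductive step, suppose $\dim \bigcap_{i=1}^m \cH_i^*$ exceeded $p - m$; then by the inductive hypothesis $\dim \bigcap_{i<m} \cH_i^* = p - m + 1$, so $\cH_m^*$ must contain this subspace. Applying \eqref{easy-ii} and the fact that $f$ commutes with intersections (by injectivity), this containment transfers back to $\bigcap_{i<m} \cH_i \cap \cB \subset \cH_m \cap \cB$; since $\bigcap_{i<m} \cH_i \cap \cB$ is relatively open in its affine hull, we conclude $\bigcap_{i<m} \cH_i \subset \cH_m$, contradicting the linear independence of the $\cH_i$'s normals.

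\textbf{Upper bound.} From the structural claim, $\dim f(\cS) \le \dim \cS$ follows at once. Writing $V := \Span \cS$ with $k := \dim V$, we express $V = \bigcap_{i=1}^{p-k} \cH_i$ with linearly independent normals; then $f(V \cap \cB) = \bigcap_{i=1}^{p-k} \cH_i^* \cap f(\cB)$ lies inside the $k$-dimensional subspace $\bigcap_i \cH_i^*$, whence $\dim f(\cS) \le k$.

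\textbf{Lower bound.} For the reverse inequality $\dim f(\cS) \ge \dim \cS$, I would run the symmetric argument for the inverse bijection $f^{-1}: f(\cB) \to \cB$, which by \eqref{easy-ii} satisfies $f^{-1}(H(g(y), g(y')) \cap f(\cB)) = H(y, y') \cap \cB$. The inductive argument transposes, provided enough hyperplanes meeting $f(\cB)$ arise as $H(g(y), g(y'))$ for some $y, y' \in \cB$; this reduces to establishing $\dim g(\cB) = p$. The latter is itself a consequence of the $m = p$ case of the structural claim: were $g(\cB)$ contained in a proper hyperplane $W$, every $\cH_i^*$ would contain the direction normal to $W$, forcing $\dim \bigcap_{i=1}^p \cH_i^* \ge 1$ in contradiction with the claim's verdict of dimension $0$.

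\textbf{Main obstacle.} The most delicate point is the lower bound, which requires the family $\{H(g(y), g(y')) : y, y' \in \cB\}$ to be rich enough within $f(\cB)$ to support the reverse structural argument. This genericity, though intuitively clear once $\dim g(\cB) = p$ is in hand, is the principal technical burden of the proof.
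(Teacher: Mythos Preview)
Your structural claim and the upper bound that follows from it are correct, and they constitute a clean alternative to the paper's descending induction for the inequality $\dim f(\cS)\le\dim\cS$. Your derivation of $\dim g(\cB)=p$ from the $m=p$ case of the claim is also valid and is a nice bonus.

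The lower bound, however, has a genuine gap. Running the structural claim for $f^{-1}$ requires more than you have. Concretely, to push the inductive step through on the image side you must pass from
\[
\Big(\bigcap_{i<m}\cH_i^*\Big)\cap f(\cB)\ \subset\ \cH_m^*\cap f(\cB)
\]
to $\bigcap_{i<m}\cH_i^*\subset\cH_m^*$, which needs $\big(\bigcap_{i<m}\cH_i^*\big)\cap f(\cB)$ to affinely span $\bigcap_{i<m}\cH_i^*$. That spanning statement is precisely an instance of the dimension preservation you are trying to prove, so the argument is circular. Knowing $\dim g(\cB)=p$ does not resolve this: it tells you that the set of normal directions $g(y)-g(y')$ spans $\bbR^p$, but it does not guarantee that through a given affine subspace of $f(\cB)$ you can realize enough perpendicular bisectors $H(g(y),g(y'))$ with independent normals (you control neither the midpoints $\tfrac12(g(y)+g(y'))$ nor the openness of $f(\cB)$). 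Your own ``main obstacle'' paragraph correctly flags this, but the reduction you assert does not go through.

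The paper avoids this difficulty entirely by proving $\dim f(\cS)\ge\dim\cS$ \emph{first}, via the half-space identity \eqref{easy-i} rather than \eqref{easy-ii}: if $x_1,\dots,x_{k+1}\in\cS$ are affinely independent but their images are not, then the images cannot be shattered by hyperplanes (a VC-dimension fact), while the originals can; transporting a separating bisector $H(y,y')$ through \eqref{easy-i} yields a separating $H(g(y),g(y'))$ on the image side, a contradiction. The upper bound is then obtained by a short descending induction that \emph{uses} the lower bound. So the paper's two halves are intertwined in the opposite order from yours, and crucially rely on \eqref{easy-i}, which you never invoke. If you want to keep your hyperplane-intersection framework, the cleanest fix is to import exactly this shattering argument for the lower bound.
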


\begin{proof} 
We first prove that $\dim f(\cS) \geq \dim \cS$. 
Let $k=\dim \cS$ and let $x_1,\dots,x_{k+1}$ be affinely independent points of $\cS$. Assume that $\dim \{f(x_1),\dots,f(x_{k+1})\} < k$. Because the Vapnik--Chervonenkis dimension of affine hyperplanes in $\bbR^k$ is exactly $k+1$, the set $ \{f(x_1),\dots,f(x_{k+1})\}$ cannot be shattered, and we can find a subset $I \subset [k+1]$ such that $(f(x_i))_{i\in I}$ cannot be separated from $(f(x_i))_{i\in I^c}$. But because $x_1,\dots,x_{k+1}$ is affinely independent, it is shattered by affine hyperplanes and it exists $\cH$ that separates $(x_i)_{i\in I}$ from $(x_i)_{i \in I^c}$. Now we can find $y,y' \in \cB$ such that $\cH = \hplane(y,y')$: indeed, it must be the case that both $I$ and $I^c$ are not empty, thus $\cH$ operates a non-trivial separation of $\{x_1,\dots,x_{k+1}\}$ and so intersects $\cB$; since $\cB$ is open, it is then easy to find two such points $y,y' \in \cB$. Now it is straight-forward to see that $\hplane(g(y),g(y'))$ also separates $(f(x_i))_{i\in I}$ from $(f(x_i))_{i \in I^c}$ through \eqref{easy-i}, leading to a contradiction. Thus $\dim f(\cS) \geq \dim \{f(x_1),\dots,f(x_{k+1})\} \geq k = \dim \cS$.

We now prove that $\dim f(\cS) \leq \dim \cS$. We do so by descending induction on $\dim \cS$.
When $\dim\cS = p - 1$, then there exists distinct $y,y' \in \cB$ such that $\cS \subset \hplane(y,y')$ and \eqref{easy-i} yields that $f(\cS) \subset \hplane(g(y),g(y'))$ hence $\dim f(\cS) \leq p-1$ by injectivity of $g$. 
Now by induction, if $\dim \cS \leq p-2$, then at least $\dim f(\cS) \leq p - 1$. Now assume that $\dim f(\cS \cup \{x\}) = \dim f(\cS)$  for all $x \notin \Span \cS$. Then, according to the first part of the proof,
$$
\dim f(\cS) = \dim f(\cS \cup (\cB \setminus \Span \cS)) \geq \dim(\cB \setminus \Span \cS) = p,
$$
which is absurd. Therefore, there exists $x \notin \Span \cS$ such that $\dim f(\cS \cup \{x\}) = \dim f(\cS) + 1$. By induction, we get
$$
\dim f(\cS) = \dim f(\cS \cup \{x\}) - 1 \leq \dim (\cS \cup \{x\}) - 1 = \dim \cS.
$$
which ends the proof.
\end{proof}

\begin{lemma} \label{lem:rect} 
Let $\cR$ be any $2^p$-tuple of $\cB$ forming a hyperrectangle. Then $g(\cR)$ is also a $2^p$-tuple forming a hyperrectangle in the same configuration as $\cR$.
\end{lemma}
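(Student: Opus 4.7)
The plan is to exhibit the hyperrectangle structure of $g(\cR)$ by carrying across a perpendicular-bisector preservation principle and then arguing axis by axis. Label $\cR = \{y_\epsilon : \epsilon \in \{0,1\}^p\}$ with $y_\epsilon = y_0 + \sum_i \epsilon_i e_i$ for pairwise orthogonal nonzero $e_1, \ldots, e_p$, and set $z_\epsilon := g(y_\epsilon)$. The goal is to produce pairwise orthogonal nonzero $\tilde e_1, \ldots, \tilde e_p$ with $z_\epsilon = z_0 + \sum_i \epsilon_i \tilde e_i$. The key principle is that whenever $y, y', y'', y''' \in \cB$ satisfy $H(y, y') = H(y'', y''')$ as hyperplanes of $\bbR^p$, one has $H(g(y), g(y')) = H(g(y''), g(y'''))$. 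Indeed, since $\cB$ is open, $H(y,y') \cap \cB$ has dimension $p-1$; by \lemref{fdimb} its image under $f$ still has dimension $p-1$; and by \eqref{easy-ii} this image lies inside both image hyperplanes, forcing them to coincide.

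I would then reduce to the 2D case. For distinct axis directions $i, j$ and any $\epsilon$ with $\epsilon_i = \epsilon_j = 0$, the four vertices $y_\epsilon, y_{\epsilon + e_i}, y_{\epsilon + e_j}, y_{\epsilon + e_i + e_j}$ form a 2D face of $\cR$ on which $H(y_\epsilon, y_{\epsilon + e_i}) = H(y_{\epsilon + e_j}, y_{\epsilon + e_i + e_j})$ and $H(y_\epsilon, y_{\epsilon + e_j}) = H(y_{\epsilon + e_i}, y_{\epsilon + e_i + e_j})$. By the principle above, the same identities transfer to the $z$'s. Writing $\vec a := z_{\epsilon + e_i} - z_\epsilon$, $\vec a' := z_{\epsilon + e_i + e_j} - z_{\epsilon + e_j}$, and similarly $\vec b, \vec b'$ for the $j$-edges, the common bisector normal forces $\vec a \parallel \vec a'$ and $\vec b \parallel \vec b'$. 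Combining this with the quadrilateral closure $\vec a + \vec b' = \vec b + \vec a'$ gives $\vec a = \vec a'$ and $\vec b = \vec b'$ as soon as $\vec a, \vec b$ are linearly independent, yielding a parallelogram. The midpoint-on-bisector condition then yields $\vec a \perp \vec b$.

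The main obstacle is ruling out the degenerate scenario where these four images are collinear. Arguing by contradiction, place $z_\epsilon$ at the origin of the putative line and write $z_{\epsilon + e_i} = a \vec v$, $z_{\epsilon + e_j} = b \vec v$, $z_{\epsilon + e_i + e_j} = c \vec v$. Equating midpoint projections along $\vec v$ on each of the two preserved bisectors gives the equations $a = b + c$ and $b = a + c$, which force $c = 0$ and $a = b$; but then two of the four $z$'s coincide, contradicting the injectivity of $g$. This secures a non-degenerate 2D rectangle structure on every 2-face of $\cR$.

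Finally, I would globalize. The 2-face conclusion says that for any $i$ and any two $\epsilon, \epsilon'$ with $\epsilon_i = \epsilon'_i = 0$ and differing in a single coordinate $j$, one has $z_{\epsilon + e_i} - z_\epsilon = z_{\epsilon' + e_i} - z_{\epsilon'}$. By transitivity along the Hamming graph of $\{0,1\}^p$, this common vector depends only on $i$; call it $\tilde e_i := z_{e_i} - z_0$. Induction on $|\epsilon|$ then yields $z_\epsilon = z_0 + \sum_i \epsilon_i \tilde e_i$. Each $\tilde e_i$ is nonzero by injectivity of $g$, and pairwise orthogonality $\tilde e_i \perp \tilde e_j$ is the 2D orthogonality applied to the 2-face spanned by $y_0, y_{e_i}, y_{e_j}, y_{e_i + e_j}$. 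Hence $g(\cR)$ is a hyperrectangle in the same combinatorial configuration as $\cR$.
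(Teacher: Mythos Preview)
Your proof is correct and follows essentially the same route as the paper: both reduce to a single 2D face, use \eqref{easy-ii} together with \lemref{fdimb} to show that coinciding perpendicular bisectors in $\cB$ are mapped to coinciding perpendicular bisectors, extract parallelism of opposite image edges from this, and then obtain orthogonality from the midpoint condition. Your version is a bit more explicit than the paper's in two respects --- you spell out the globalization from 2-faces to the full $2^p$-tuple via the Hamming graph, and you separately dispose of the collinear degeneracy (the case $v_{13}=-v_{24}$ in the paper's notation) --- but the underlying argument is the same.
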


\begin{proof} 
It suffices to establish that, for any point set $y_1, y_2, y_3, y_4 \in \cB$ that forms a rectangle, the point set $g(y_1), g(y_2), g(y_3), g(y_4)$ also forms a rectangle. 

Assume without loss of generality that $y_1-y_2 = y_3 - y_4$.
In that case, we have $\hplane(y_1,y_2) = \hplane(y_3,y_4)$.
Then, by \eqref{easy-ii}, $\hplane(g(y_1),g(y_2))$ and $\hplane(g(y_3),g(y_4))$ both contain $f(\hplane(y_1,y_2) \cap \cB)$, and because that subset has dimension $p-1$ by \lemref{fdimb}, it must be that $\hplane(g(y_1),g(y_2)) = \hplane(g(y_3),g(y_4)) =: \cH$.
In particular, $v_{12} := g(y_1)-g(y_2)$ is parallel to $v_{34} := g(y_3)-g(y_4)$. Similarly, since $y_1 - y_3 = y_2 - y_4$, we also have that $v_{13} :=  g(y_1)-g(y_3)$ is parallel to $v_{24} := g(y_2)-g(y_4)$. 
Notice that 
\[\tfrac12 (v_{13} + v_{24}) = \tfrac12 (g(y_1)+g(y_2)) - \tfrac12 (g(y_3)+g(y_4)) \in \cH - \cH \subset \dir \cH,\]
which by parallelism of $v_{13}$ and $v_{24}$ can only be true if both $v_{13}, v_{24} \in \dir \cH$. 
In particular, $v_{13}$ and $v_{24}$ are perpendicular to $v_{12}$ and $v_{34}$.
This proves that $g(y_1),g(y_2),g(y_3),g(y_4)$ forms a rectangle in the same configuration than $y_1, y_2, y_3, y_4$. As a result, $g(\cR)$ is a hyperrectangle in the same configuration as $\cR$.
\end{proof}

As an immediate corollary, we get the following.
\begin{corollary} \label{cor:gdimb} $g(\cB)$ has affine dimension $p$.
\end{corollary}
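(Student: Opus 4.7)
The plan is to apply Lemma \ref{lem:rect} directly. Since $\cB$ is an open ball, it is straightforward to exhibit a $2^p$-tuple $\cR \subset \cB$ forming a non-degenerate hyperrectangle, i.e., a hyperrectangle whose vertices affinely span a $p$-dimensional affine subspace (for instance, take the vertices of a small enough axis-aligned hypercube centered at the center of $\cB$).

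By Lemma \ref{lem:rect}, $g(\cR)$ is a $2^p$-tuple forming a hyperrectangle in the same configuration as $\cR$. In particular, $g(\cR)$ is non-degenerate, so its affine span has dimension $p$. Since $g(\cR) \subset g(\cB)$, we deduce $\dim g(\cB) \geq p$. The reverse inequality is trivial since $g(\cB) \subset \bbR^p$, hence $\dim g(\cB) = p$.

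The only subtlety is ensuring that ``same configuration'' in Lemma \ref{lem:rect} really forces non-degeneracy; but this is built into the statement, because the $2^p$-tuple $g(\cR)$ being a hyperrectangle with the same adjacency structure and the same parallel/perpendicular relations as $\cR$ prevents its affine dimension from dropping below that of $\cR$. No substantial obstacle is expected here — the corollary is essentially an immediate consequence of the lemma.
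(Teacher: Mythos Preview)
Your proposal is correct and matches the paper's approach exactly: the paper states this as an immediate corollary of Lemma~\ref{lem:rect} with no further argument, and your reasoning (a non-degenerate hyperrectangle in $\cB$ maps to one in $g(\cB)$, forcing $\dim g(\cB)=p$) is precisely the intended justification. Your remark on non-degeneracy is well taken and follows since the $p$ edge vectors of $g(\cR)$ are nonzero (by injectivity of $g$) and mutually orthogonal (by the rectangle property on each 2-dimensional face).
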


\begin{lemma} \label{lem:glineb} Let $\cL$ be a line intersecting $\cB$. Then $g(\cL \cap \cB)$ is contained in a line. Furthermore, if $\cL'$ is another line intersecting $\cB$ parallel to $\cL$, then $g(\cL'\cap \cB)$ is parallel to $g(\cL\cap\cB)$.
\end{lemma}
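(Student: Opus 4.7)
The plan is to establish both parts of the lemma via \lemref{rect}, by embedding $\cL$ (and, for parallelism, the pair $\cL, \cL'$) into $p$-dimensional hyperrectangles inside $\cB$ and exploiting the rigidity of their $g$-images.

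For the collinearity claim, I would fix a reference point $y_0 \in \cL \cap \cB$, let $\ell$ be a unit vector in the direction of $\cL$, and pick an orthonormal basis $u_1,\dots,u_{p-1}$ of $\ell^\perp$. For any $y \in \cL \cap \cB$ and any $\epsilon > 0$ small enough (depending on $y$), the $2^p$ points $y_0 + a_0(y - y_0) + \sum_{k=1}^{p-1} a_k \epsilon u_k$, $a \in \{0,1\}^p$, all lie in $\cB$ and form a hyperrectangle. By \lemref{rect}, its $g$-image is a hyperrectangle in the same configuration; in particular, the $p-1$ vectors $w_k := g(y_0 + \epsilon u_k) - g(y_0)$ are mutually orthogonal, and $g(y) - g(y_0)$ is perpendicular to each $w_k$. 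Hence $g(y) - g(y_0) \in W_\epsilon^\perp$, where $W_\epsilon := \Vect\{w_1,\dots,w_{p-1}\}$ has dimension $p-1$, so $W_\epsilon^\perp$ is a line. To identify this line independently of $\epsilon$, I would fix an auxiliary $y_1 \in \cL \cap \cB$ close enough to $y_0$ that the argument applies for every sufficiently small $\epsilon$; then $g(y_1) - g(y_0)$ is nonzero by injectivity of $g$ and lies in every such $W_\epsilon^\perp$, forcing $W_\epsilon^\perp = L$ for $L := \bbR \cdot (g(y_1) - g(y_0))$. Shrinking $\epsilon$ as needed for each $y$ then gives $g(y) - g(y_0) \in L$, so $g(\cL \cap \cB) \subset g(y_0) + L$.

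For the parallelism claim, let $\cL'$ be parallel to $\cL$ and intersect $\cB$. I would pick $y_0 \in \cL \cap \cB$ and $y_0' \in \cL' \cap \cB$ such that $d := y_0' - y_0$ is perpendicular to $\ell$ (for instance, the feet of the perpendiculars from the center of $\cB$ onto $\cL$ and $\cL'$). For small $t > 0$, set $y_1 := y_0 + t\ell$ and $y_1' := y_0' + t\ell$, and extend the planar rectangle $\{y_0, y_1, y_0', y_1'\}$ to a $p$-dimensional hyperrectangle in $\cB$ by adjoining small perpendicular edges $\epsilon v_1,\dots,\epsilon v_{p-2}$, where $v_1,\dots,v_{p-2}$ form an orthonormal basis of the orthogonal complement of $\Vect\{\ell, d\}$. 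Then \lemref{rect} yields $g(y_1) - g(y_0) = g(y_1') - g(y_0')$, as these are opposite edges of the image hyperrectangle. By the first part, $g(y_1) - g(y_0)$ spans the direction of the line containing $g(\cL \cap \cB)$ and $g(y_1') - g(y_0')$ spans that of $g(\cL' \cap \cB)$, so the two image lines are parallel.

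The main technical point is that no single $\epsilon$ can be chosen uniformly over the entire chord $\cL \cap \cB$, because points near the boundary of $\cB$ only admit small perpendicular perturbations that remain inside. The device above sidesteps this by anchoring on the fixed nonzero vector $g(y_1) - g(y_0)$: it lies in $W_\epsilon^\perp$ for every admissible $\epsilon$ and so pins down $W_\epsilon^\perp$ to the common line $L$, regardless of which $\epsilon$ was chosen for a given $y$.
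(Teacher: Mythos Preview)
Your proposal is correct and rests on the same key ingredient as the paper, namely \lemref{rect}. The only difference is organizational: the paper takes any three points $x,x',x''\in\cL\cap\cB$ and builds \emph{two} hyperrectangles sharing a common facet at $x'$, with $[xx']$ and $[x'x'']$ as the edges perpendicular to that facet; since the two image hyperrectangles share the same image facet, $g(x)-g(x')$ and $g(x'')-g(x')$ are both perpendicular to it, hence collinear. This sidesteps entirely the issue of whether the transverse frame $W_\epsilon$ depends on $\epsilon$, which you address (correctly) via the auxiliary point $y_1$. For parallelism the two arguments are essentially identical: both build a single hyperrectangle with one pair of opposite edges on $\cL$ and $\cL'$, your choice of the feet of perpendiculars from the center of $\cB$ being a clean way to guarantee such a hyperrectangle exists.
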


\begin{proof} Let $x,x'$ and $x''$ three points of $\cL \cap \cB$. Then we can construct two hyperrectangles $\cR$ and $\cR'$ of $\cB$ with a common facet and such that $[xx']$ and $[x'x'']$ are two edges of $\cR$ and $\cR'$ orthogonal to that common facet. Since $g(\cR)$ and $g(\cR')$ are two hyperrectangles in the same configuration as $\cR$ and $\cR'$, theyx also share a common facet, and $[g(x)g(x')]$ and $[g(x')g(x'')]$ must be orthogonal to that common hyperfacet. They are thus parallel, so that $g(x), g(x')$ and $g(x'')$ are colinear.

For the second part of the proof, we can build a third hyperrectangle $\cR''$ of $\cB$ which contains two edges that are supported on $\cL$ and $\cL'$. Since $g(\cR'')$ is again a hyperrectangle, the images of these edges are parallel, and so must be $g(\cL\cap\cB)$ and $g(\cL'\cap\cB)$.
\end{proof}

\begin{lemma} \label{lem:addb} 
Assume that $0 \in \cB$ and that $g(0) = 0$.
It holds that $g(y_0+y_1) = g(y_0)+g(y_1)$ for all $y_0,y_1\in \cB$ such that $0 \notin (y_0y_1)$ and $y_0+y_1 \in \cB$.
\end{lemma}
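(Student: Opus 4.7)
The plan is to consider the parallelogram with vertices $0, y_0, y_1, y_0+y_1$, all in $\cB$, and to track its image under $g$. Its two pairs of opposite sides are supported on parallel lines, so \lemref{glineb} yields four lines $L_1, L_2, L_3, L_4 \subset \bbR^p$ with $L_1 \parallel L_2$ and $L_3 \parallel L_4$, such that $L_1$ contains $\{0, g(y_0)\}$, $L_2$ contains $\{g(y_1), g(y_0+y_1)\}$, $L_3$ contains $\{0, g(y_1)\}$, and $L_4$ contains $\{g(y_0), g(y_0+y_1)\}$. Since the direction of $L_1$ is $\bbR\, g(y_0)$, we have $L_2 = g(y_1) + \bbR\, g(y_0)$, and similarly $L_4 = g(y_0) + \bbR\, g(y_1)$; in particular, both $g(y_0+y_1)$ and $g(y_0)+g(y_1)$ belong to $L_2 \cap L_4$. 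If $L_2 \neq L_4$, this intersection is a single point and the lemma follows. But $L_2 = L_4$ would force $L_1 \parallel L_3$, hence $L_1 = L_3$ (as both pass through $0$), which is equivalent to $g(y_0)$ and $g(y_1)$ being linearly dependent. So the proof reduces to showing that $g(y_0)$ and $g(y_1)$ are linearly independent.

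This non-degeneracy is the main obstacle. I argue by contradiction: suppose $g(y_0)$ and $g(y_1)$ both lie on a line $M$ through $0 = g(0)$. Then by \lemref{glineb}, both $(0y_0) \cap \cB$ and $(0y_1) \cap \cB$ map into $M$. Using parallelism preservation, for any $z$ close enough to $0$ in the plane $\Pi$ spanned by $y_0, y_1$, the line through $z$ parallel to $(0y_1)$ meets $(0y_0)$ at a point $y^\star = \alpha y_0 \in \cB$ (where $z = \alpha y_0 + \beta y_1$ and $|\alpha|$ is small when $\|z\|$ is small), so its $g$-image is a line parallel to $M$ passing through $g(y^\star) \in M$, and therefore coincides with $M$. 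It follows that $g$ maps a whole open two-dimensional piece of $\Pi \cap \cB$ around $0$ into $M$.

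To reach the contradiction, I pick a small non-degenerate rectangle $\cR \subset \Pi \cap \cB$ inside that open piece, with sides along two orthogonal unit vectors of $\Pi$. All four vertices of $\cR$ then lie in the neighborhood above, so $g(\cR) \subset M$. But \lemref{rect} guarantees that $g(\cR)$ is a non-degenerate hyperrectangle in the same configuration as $\cR$; in particular, its vertices span a two-dimensional affine subspace, which cannot fit inside the single line $M$. This contradiction establishes the linear independence of $g(y_0)$ and $g(y_1)$, completing the proof.
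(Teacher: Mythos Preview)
Your argument is correct and follows the same parallelogram route as the paper: use \lemref{glineb} to place $g(y_0+y_1)$ on the two lines $g(y_1)+\Vect g(y_0)$ and $g(y_0)+\Vect g(y_1)$, and conclude that it equals $g(y_0)+g(y_1)$. The paper simply asserts that this intersection is the single point $\{g(y_0)+g(y_1)\}$, whereas you supply the missing justification that $g(y_0)$ and $g(y_1)$ are linearly independent, via the rectangle argument; note that what you invoke is really the four-point rectangle statement established in the \emph{proof} of \lemref{rect} (the lemma as stated is about $2^p$-tuples), but that is exactly what is proved there, so your use is legitimate.
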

\begin{proof} 
Take $y_0 \in \cB$ and $\cL_0 = \Vect(y_0)$. Then, by \lemref{glineb}, $g(\cL_0 \cap \cB) \subset \Vect(g(y_0))$. Now let $y_1 \in \cB$ such that $0 \notin (y_0y_1)$ and such that $y_0+y \in \cB$, and denote $\cL_1 = \Vect(y_1)$. Since $g$ conserves parallelism also by \lemref{glineb}, $g(\{y_1+\cL_0\} \cap \cB)$ is contained in a line with direction $\Vect(g(y_0))$, so that $g(\{y_1+\cL_0\}\cap \cB) \subset g(y_1) + \Vect(g(y_0))$.
Similarly, $g(\{y_0+\cL_1\}\cap \cB) \subset g(y_0) + \Vect(g(y_1))$. 
Now, since $y_0+y_1$ is the intersection of $y_0+\cL_1$ and $y_1+\cL_0$, there holds
\begin{align*} 
g(y_0+y_1) &\in g\(\{y_1+\cL_0\} \cap \cB\) ~ \bigcap~ g\(\{y_0+\cL_1\}\cap\cB\) \\
&\subset \{g(y_1) + \Vect(g(y_0))\}\ \bigcap\ \{g(y_0) + \Vect(g(y_1))\} = \{g(y_0) + g(y_1)\}. \qedhere
\end{align*} 
\end{proof}

\begin{lemma} \label{lem:multb} 
Assume that $0 \in \cB$ and that $g(0) = 0$.
It holds that $g(-y) = -g(y)$ for all $y\in\cB$.
\end{lemma}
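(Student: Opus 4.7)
The plan is to combine the additivity from \lemref{addb} with the fact that $g$ preserves lines through $0$ (\lemref{glineb}) to first prove the identity in a small neighborhood of $0$ and then bootstrap it to all of $\cB$.

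For the local statement, I would choose small, linearly independent vectors $z, z' \in \cB$ with $z \pm z', 2z, 2z' \in \cB$, and apply \lemref{addb} three times to compute
\begin{equation*}
g(2z) = g\bigl((z+z') + (z-z')\bigr) = g(z+z') + g(z-z') = 2g(z) + g(z') + g(-z').
\end{equation*}
Each application is legitimate because, when $z$ and $z'$ are linearly independent, none of the pairs $\{z, z'\}$, $\{z, -z'\}$, $\{z+z', z-z'\}$ is collinear with $0$ (for the last, the line through $z+z'$ and $z-z'$ has direction $z'$ and passes through $z$, so it misses $0$). Swapping the roles of $z$ and $z'$ shows that $g(z') + g(-z') = g(2z) - 2g(z)$ is independent of $z'$, and, symmetrically, of $z$. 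Hence it equals a constant $c \in \bbR^p$ for all admissible small nonzero $z, z'$, and in particular $g(2z) = 2g(z) + c$ on a small neighborhood of $0$.

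To see that $c = 0$, I would apply the identity $g(z') + g(-z') = c$ with $z'$ replaced by $2z$ (still small), and expand both terms using $g(\pm 2z) = 2g(\pm z) + c$:
\begin{equation*}
c = g(2z) + g(-2z) = 2\bigl(g(z) + g(-z)\bigr) + 2c = 4c,
\end{equation*}
which forces $c = 0$. Thus $g(z) + g(-z) = 0$ (and hence $g(2z) = 2g(z)$) for every $z$ in some small neighborhood $U$ of $0$.

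Finally, for a general $y \in \cB$, I would pick $w \in U$ not parallel to $y$, with $\pm y + w \in \cB$ and $2w \in U$. Applying \lemref{addb} to expand $g(y+w)$ and $g(-y+w)$, and also to the decomposition $2w = (y+w) + (-y+w)$ (whose connecting line has direction $y$ through $w$, hence avoids $0$ by the choice of $w$), I obtain
\begin{equation*}
2g(w) = g(2w) = g(y+w) + g(-y+w) = g(y) + g(-y) + 2g(w),
\end{equation*}
where $g(2w) = 2g(w)$ comes from the local statement just proved. This yields $g(y) + g(-y) = 0$, as desired. The main subtlety throughout is verifying that the hypotheses of \lemref{addb} hold simultaneously at each step: the summed pairs must not be collinear with $0$, and the relevant sums must stay inside $\cB$. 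Both requirements can be arranged by choosing the auxiliary vectors sufficiently small and in general position with respect to $y$.
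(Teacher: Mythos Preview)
Your proof is correct, and it shares with the paper the core identity $g(y)+g(-y)=g(2z)-2g(z)$, obtained via three applications of \lemref{addb}. The difference lies in how the two arguments show this quantity vanishes. The paper applies the identity directly to an arbitrary $y_0\in\cB$ and an auxiliary $y_1\in\cB\setminus\Vect(y_0)$ (with a single small $z$ adapted to both), and then uses \lemref{glineb} together with the injectivity of $g$ to conclude that the common value $g(y_0)+g(-y_0)=g(y_1)+g(-y_1)$ lies in $\Vect g(y_0)\cap\Vect g(y_1)=\{0\}$. You instead use a purely algebraic doubling trick: once $g(2z)-2g(z)=c$ and $g(z)+g(-z)=c$ for small $z$, the substitution $z\mapsto 2z$ yields $c=4c$, forcing $c=0$; you then bootstrap from small $y$ to all of $\cB$ with one more application of \lemref{addb}. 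Your route is slightly longer but more self-contained, relying only on \lemref{addb} and sidestepping both \lemref{glineb} and the (somewhat delicate) linear-independence claim the paper draws from injectivity. One small cosmetic point: your opening sentence announces the use of \lemref{glineb}, but your argument never actually invokes it.
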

\begin{proof} 
Let $y_0 \in \cB \setminus \{0\}$ and let $y_1 \in \cB \setminus \Vect(y_0)$. Then there exists (a small) $z$ in $\cB$ that is not in $\Vect(y_0)$ or $\Vect(y_1)$ and such that $y_0 \pm z$ and $y_1 \pm z$ are in $\cB$. Then, thanks to \lemref{addb}, 
\begin{align}
g(y_0)+g(-y_0) 
&= g(z+y_0)+g(z-y_0)-2g(z) \\
&= g(2z)-2g(z) \\
&= g(z+y_1)+g(z-y_1)-2g(z) \\
&= g(y_1) + g(-y_1),
\end{align}
so that $g(y_0)+g(-y_0) \in  \Vect g(y_0) \cap \Vect g(y_1)$. This last intersection is $\{0\}$ by injectivity of $g$.
\end{proof}

\begin{lemma} \label{lem:fgb} 
Assume that $0 \in \cB$ and that $g(0) = 0$.
It holds that $f(0) = 0$. 
\end{lemma}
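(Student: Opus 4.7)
The plan is to show that $f(0)$ is orthogonal to $p$ linearly independent vectors, which forces $f(0) = 0$.

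First, because $\cB$ is open and contains $0$, I can choose an orthonormal basis $e_1, \dots, e_p$ of $\bbR^p$ and $r > 0$ small enough that the $2^p$-tuple $\cR := \{\sum_{i=1}^p \varepsilon_i r e_i : \varepsilon \in \{0,1\}^p\}$, which forms the vertices of an axis-aligned cube, lies inside $\cB$. In particular, the points $\pm r e_i$ all belong to $\cB$.

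Next, I exploit the central symmetry $g(-r e_i) = -g(r e_i)$ granted by \lemref{multb}. Applying \eqref{internal_point_continuum_equal} at $x = 0$ with $y = r e_i$ and $y' = -r e_i$ — legitimate since $\|0 - r e_i\| = \|0 - (-r e_i)\| = r$ — yields $\|f(0) - g(r e_i)\| = \|f(0) - g(-r e_i)\| = \|f(0) + g(r e_i)\|$. Squaring both sides and simplifying, this gives $\<f(0), g(r e_i)\> = 0$ for every $i \in [p]$.

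Finally, I argue that $g(r e_1), \dots, g(r e_p)$ are linearly independent. Applying \lemref{rect} to $\cR$, the image $g(\cR)$ is a hyperrectangle in the same configuration, with $g(0) = 0$ as one of its vertices and $g(r e_1), \dots, g(r e_p)$ as the $p$ outgoing edges from this vertex. These edges are therefore mutually perpendicular and nonzero (by injectivity of $g$), hence form an orthogonal basis of $\bbR^p$. Combined with the previous step, $f(0)$ is orthogonal to a basis of $\bbR^p$ and must vanish.

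I do not anticipate a real obstacle here: the only care needed is to shrink $r$ enough that the whole cube (not only its edges) sits inside $\cB$, which is immediate from openness of $\cB$ at $0$. All the heavy lifting has already been done in \lemref{multb} (to get the antipodal symmetry of $g$) and \lemref{rect} (to produce an orthogonal frame in the image).
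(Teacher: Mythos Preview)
Your argument is correct and follows essentially the same line as the paper's proof: both show that $f(0)$ is orthogonal to a set of images $g(y)$ spanning $\bbR^p$, using \lemref{multb} to turn the equidistance $\|f(0)-g(y)\|=\|f(0)-g(-y)\|$ into $\langle f(0),g(y)\rangle=0$. The only difference is cosmetic: the paper takes all $y\in\cB$ and invokes Corollary~\ref{cor:gdimb} for the spanning, whereas you pick $p$ specific points and read linear independence directly off \lemref{rect}.

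One small slip: the cube $\cR=\{\sum_i\varepsilon_i r e_i:\varepsilon_i\in\{0,1\}\}$ does \emph{not} contain $-re_i$, so your ``In particular'' is unjustified. Simply require in addition that $r$ is small enough for $-re_i\in\cB$ (e.g.\ take $r$ with $\ball(0,r\sqrt{p})\subset\cB$); nothing else changes.
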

\begin{proof} 
Let $x \in \cB \setminus \{0\}$. 
By \eqref{easy-ii}, $f(\hplane(x,-x) \cap \cB) \subset \hplane(g(x),-g(x))$, and by \lemref{multb}, $\hplane(g(x),-g(x)) = \Vect g(x)^\perp$. Thus, 
\[
f(0) \in \bigcap_{x\in \cB} \Vect g(x)^\perp = g(\cB)^\perp = \{0\},
\]
because $\dim g(\cB) = p$ thanks to Corollary \ref{cor:gdimb}. 
\end{proof}

\begin{proof}[Proof of \thmref{internal_point}]
Fix an arbitrary $x_0 \in \cB$ and define $g_0(x) := g(x+x_0)-g(x_0)$ and $f_0(x) := f(x+x_0)-g(x_0)$. Then $g_0$ and $f_0$ satisfy \eqref{internal_point_continuum}, $0 \in \cB - x_0$ and $g_0(0) = 0$. Therefore, thanks to \lemref{fgb}, $f_0(0) = 0$, and hence $f(x_0) = g(x_0)$. 
We have thus established that $f = g$ on $\cB$. 
\end{proof}

\begin{remark}
We initially thought that we could work in \thmref{internal_point} under more general conditions on $\cX$ and $\cY$. The result might hold, for example, if $\cX$ and $\cY$ are open and have a non-empty intersection. We do not know whether this is the case or not. 
We note, however, that it is not sufficient that $\interior(\cX \cap \cY) \ne \emptyset$ --- a condition that would be in line with what was assumed in \thmref{external_point} and \thmref{mds}. Indeed, consider a situation where $\cX$ is the unit ball (open or closed) and $\cY = \cX \cup \cA$, $\cA := \{a u : a \ge 1\}$, with $u$ an arbitrary normed vector. In that case, $f = {\rm id}$ on $\cX$ and any $g = {\rm id}$ on $\cX$ and increasing along $\cA$ with $g(\cA) \subset \cA$, satisfy \eqref{internal_point_continuum}.
\end{remark}

\subsection{Discrete Asymptotic Setting}
\label{sec:internal point asymptotic}

We return to the discrete setting of \secref{internal point discrete} in an asymptotic setting where there is a pair of configurations satisfying \eqref{internal point problem} that become dense in a suitable manner. 

The following is inspired by \prpref{gBopen}. 

\begin{theorem}
\label{thm:internal point asymptotic}
Let $(x_n), (y_n), (x'_n), (y'_n)$ be bounded sequences of distinct points in $\bbR^d$ such that, for each $n$, $\{x_1, \dots, x_n; y_1, \dots, y_n\}$ and $\{x'_1, \dots, x'_n; y'_1, \dots, y'_n\}$ are equivalent configurations in the sense of \eqref{internal_equivalent_point_discrete}. 
For each $n$, let $f_n : \{x_i: i \ge 1\} \to \{x'_i : i \ge 1\}$ and $g_n : \{y_i: i \ge 1\} \to \{y'_i: i \ge 1\}$ be such that $f_n(x_i) = x'_i$ and $g_n(y_i) = y'_i$ for $i \in [n]$. 
Then $(f_n, g_n)$ is sequentially compact for the pointwise convergence topology.
Assume there is $N \subset \bbN$ such that  
\begin{gather}
\interior(\closure(x_n:n \in N)) \ne \emptyset, \label{internal asymptotic 1} \\
(x_n:n \in N) \subset \closure(y_n:n \in N), \label{internal asymptotic 2} \\
(x'_n:n \in N) \subset \interior(\closure(y'_n:n \in N)). \label{internal asymptotic 3} 
\end{gather}
Then all the functions where $(f_n)$ accumulates are similarity transformations. 
If in addition either $(x_n)$ is dense in the convex hull of $(y_n)$, or $(y_n)$ is dense in its convex hull, then all the functions where $(f_n, g_n)$ accumulates are of the form $(L, L)$ where $L$ is a similarity transformation.
\end{theorem}

With more work, perhaps along the lines of what is done in \cite{klein,arias2017some}, it might be possible to derive a uniform rate of convergence.

\begin{proof}
As in the proof of \thmref{mds asymptotic}, the fact that $(f_n, g_n)$ is sequentially compact for the pointwise convergence topology is a standard result. 

To establish the second part of the statement, it suffices to consider the situation where $(f_n, g_n)$ converges pointwise on $\cX_\infty \times \cY_\infty := \{x_i : i \ge 1\} \times \{y_i : i \ge 1\}$. Let $(f, g)$ denote the limit, so that $\lim_n f_n(x_i) = f(x_i)$ and $\lim_n g_n(x_i) = g(x_i)$, for all $i \ge 1$. 
By the properties that define $f_n$ and $g_n$, we have $f(x_i) = x'_i$ and $g(y_i) = y'_i$ for $i \in \bbN$. In addition, we can write \eqref{internal_equivalent_point_discrete} as follows
\begin{equation}
\text{$\|x_i-y_k\| < \|x_i-y_l\| \iff \|f_n(x_i)-g_n(y_k)\| < \|f_n(x_i)-g_n(y_l)\|$, \quad for all $(i,k,l) \in [n] \times [n] \times [n]$,}
\end{equation}
and letting $n\to\infty$ this yields
\begin{equation}
\label{internal asymptotic proof 1}
\text{$\|x_i-y_k\| < \|x_i-y_l\| \implies \|f(x_i)-g(y_k)\| \le \|f(x_i)-g(y_l)\|$, \quad for all $(i,k,l) \in \bbN \times \bbN \times \bbN$.}
\end{equation}

Define $\cX_N := \{x_n:n \in N\}$ and $\cY_N := \{y_n:n \in N\}$.
Take $x \in \cX_N$. We claim that, if $\lim_{n \in N_1} y_n = x$ some $N_1 \subset N$, then $\lim_{n \in N_1} g(y_n) = f(x)$. Indeed, $f(x) \in f(\cX_N)$, and \eqref{internal asymptotic 2} also says that $g(\cY_\infty)$ is dense in $f(\cX_N)$ so that there is $N_2$ such that $\lim_{n \in N_2} g(y_n) = f(x)$ and $g(y_n), n \in N_2$ are all distinct. In particular, $y_n, n \in N_2$ are all distinct, so that we may assume that $\|x - y_n\| > 0$ for all $n \in N_2$.    
Now, for $n_2 \in N_2$, if $n_1 \in N_1$ is sufficiently large that $\|x-y_{n_1}\| < \|x-y_{n_2}\|$, we have $\|f(x)-g(y_{n_1})\| \le \|f(x)-g(y_{n_2})\|$ by \eqref{internal asymptotic proof 1}. And from this we deduce that 
\begin{equation}
\limsup_{n \in N_1} \|f(x)-g(y_{n_1})\| \le \limsup_{n_2 \in N_2} \|f(x)-g(y_{n_2})\| = 0.
\end{equation}

Given $x, x', x'' \in \cX_N$ such that $\|x -x'\| < \|x-x''\|$, let $N' \subset N$ and $N'' \subset N$ be such that $\lim_{n \in N'} y_n = x'$ and $\lim_{n \in N''} y_n = x''$. We now know that $\lim_{n \in N'} g(y_n) = f(x')$ and $\lim_{n \in N''} g(y_n) = f(x'')$. And when $n' \in N'$ and $n'' \in N''$ are large enough, $\|x -y_{n'}\| < \|x-y_{n''}\|$, so that $\|f(x)-g(y_{n'})\| \le \|f(x)-g(y_{n''})\|$ by \eqref{internal asymptotic proof 1}. Passing to the limit, we thus have $\|f(x)-f(x')\| \le \|f(x)-f(x'')\|$. We have thus established that $f$ is weakly isotonic on $\cX_N$ in the sense of \eqref{weakly_isotonic}. Because we also assume in \eqref{internal asymptotic 1} that $\interior(\closure\,\cX_N) \ne \emptyset$, we may apply \thmref{mds asymptotic} --- to the constant sequence of functions $(f)$ --- to deduce that $f$ coincides on $\cX_\infty$ with a similarity. Without loss of generality, we assume henceforth that $f(x) = x$ for all $\cX_\infty$. And we extend $f$ to $\bbR^d$ continuously.

Let $\cB = \interior(\closure\,\cX_N)$. We saw above that whenever $\lim_{n \in N_1} y_n = x \in \cX_N$ for some $N_1 \subset N$, then $\lim_{n \in N_1} g(y_n) = f(x)$. Now using the fact that $f(x) = x$, we see that $y_n \to x$ along some sequence within $\cY_N$ implies $g(y_n) \to x$. This clearly extends to $x \in \cB$, which then trivially yields $g(y) = y$ whenever $y \in \cB \cap \cY_N$ by considering the constant sequence $(y)$. We have thus established that $f = g = {\rm id}$ on the open set $\cB$ after extending $g$ by continuity.

Define $\cX := \closure\,\cX_\infty$. By assumption, either $\cX$ contains the convex hull of $\cY_\infty$, or $\cY := \closure\,\cY_\infty$ is convex. In the first case, we can directly apply \lemref{ball}. In the latter case, it is not clear how to do the same as we do not have an extension of $g$ to the entirety of $\cY$. But we can check that the same arguments underlying the proof of (the 2nd part of) \lemref{ball} apply verbatim.  \end{proof}

\section{Discussion}
\label{sec:discussion}

\paragraph{Error rates}
Beyond consistency results, one may want to derive error bounds. There are some precedents. 
For ordinal external unfolding (\secref{external}), \citet{massimino2021you} are able to obtain a bound for the very specific sampling model they consider in their work. 
For ordinal multidimensional scaling (\secref{mds}), an error bound or convergence rate is established in \cite{arias2017some}, although under slightly stronger assumptions. We note that the rate obtained for the one-dimensional case ($d=1$) is shown in \cite{ellenberg2019convergence} to be optimal. (We mention that the more general case of a metric space is considered in \cite{gouic2023recovering}, where a rate of convergence is also established.)
For ordinal internal unfolding (\secref{internal}), the literature seems totally devoid of any error rates. As we indicated earlier in the paper, despite its importance, there is a dearth of theory for this problem, and the consistency results derived here are the only such results that we know of besides the more loose, but nonetheless pioneering argumentation of \citet{shepard1966metric}. At the moment, we do not see a clear path towards establishing an error bound for this problem.

\paragraph{Missingness and noise}
We have avoided issues of missingness and noise in the data. 
While there is a good amount of theory available in the metric setting on these two issues, for example, in \cite{arias2020perturbation, anderson2010formal, arias2025stability, javanmard2013localization, li2020central, vishwanath2025minimax} (and of course the whole literature on {\em graph rigidity theory}) there is comparatively little in the ordinal setting.
A situation with some missingness, where only local comparisons are available, is considered in the context of ordinal embedding in \cite{arias2017some}, and some theory is developed in a framework that allows for erroneous comparisons in \cite{jain2016finite}.
However, we do not see an elegant and useful way of extending the continuous model to allow for missingness and/or noise in the comparisons.

\subsection*{Acknowledgements}
The work of EAC was partially supported by the US National Science Foundation (DMS 1916071).
The work of CB was supported by the Deutsche Foschungsgemeinschaft (German Research Foundation) on the French-German PRCI ANR ASCAI CA 1488/4-1 ``Aktive und Batch-Segmentierung, Clustering und Seriation: Grundlagen der KI''.
The work of DK was partially supported by the US National Science Foundation (NSF Medium Award CCF-2107547 and NSF Award CCF-1553288 CAREER).

{\small
\bibliographystyle{chicago}
\bibliography{../ref}

\begin{thebibliography}{}

\bibitem[\protect\citeauthoryear{Agarwal, Wills, Cayton, Lanckriet, Kriegman,
  and Belongie}{Agarwal et~al.}{2007}]{agarwal2007generalized}
Agarwal, S., J.~Wills, L.~Cayton, G.~Lanckriet, D.~J. Kriegman, and S.~Belongie
  (2007).
\newblock Generalized non-metric multidimensional scaling.
\newblock In {\em International Conference on Artificial Intelligence and
  Statistics}, pp.\  11--18.

\bibitem[\protect\citeauthoryear{Anderson, Shames, Mao, and Fidan}{Anderson
  et~al.}{2010}]{anderson2010formal}
Anderson, B.~D., I.~Shames, G.~Mao, and B.~Fidan (2010).
\newblock Formal theory of noisy sensor network localization.
\newblock {\em SIAM Journal on Discrete Mathematics\/}~{\em 24\/}(2), 684--698.

\bibitem[\protect\citeauthoryear{Anderson}{Anderson}{2003}]{MVA}
Anderson, T.~W. (2003).
\newblock {\em An Introduction to Multivariate Statistical Analysis\/} (3rd
  ed.).
\newblock Hoboken: John Wiley and Sons.

\bibitem[\protect\citeauthoryear{Anderton and Aslam}{Anderton and
  Aslam}{2019}]{anderton2019scaling}
Anderton, J. and J.~Aslam (2019).
\newblock Scaling up ordinal embedding: {A} landmark approach.
\newblock In {\em International Conference on Machine Learning}.

\bibitem[\protect\citeauthoryear{Anonymous}{Anonymous}{1994}]{glossary1994}
Anonymous (1994).
\newblock {\em Glossary of the Mapping Sciences}.
\newblock The American Society for Photogrammetry and Remote Sensing, the
  American Congress on Surveying and Mapping, and the American Society of Civil
  Engineers.

\bibitem[\protect\citeauthoryear{Arias-Castro}{Arias-Castro}{2017}]{arias2017some}
Arias-Castro, E. (2017).
\newblock Some theory for ordinal embedding.
\newblock {\em Bernoulli\/}~{\em 23\/}(3), 1663--1693.

\bibitem[\protect\citeauthoryear{Arias-Castro, Javanmard, and
  Pelletier}{Arias-Castro et~al.}{2020}]{arias2020perturbation}
Arias-Castro, E., A.~Javanmard, and B.~Pelletier (2020).
\newblock Perturbation bounds for {P}rocrustes, classical scaling, and
  trilateration, with applications to manifold learning.
\newblock {\em Journal of Machine Learning Research\/}~{\em 21}, 1--37.

\bibitem[\protect\citeauthoryear{Arias-Castro and Vishwanath}{Arias-Castro and
  Vishwanath}{2025}]{arias2025stability}
Arias-Castro, E. and S.~Vishwanath (2025+).
\newblock Stability of sequential lateration and of stress minimization in the
  presence of noise.
\newblock {\em SIAM Journal on Mathematics of Data Science\/}.
\newblock To appear.

\bibitem[\protect\citeauthoryear{Aspnes, Eren, Goldenberg, Morse, Whiteley,
  Yang, Anderson, and Belhumeur}{Aspnes et~al.}{2006}]{aspnes2006theory}
Aspnes, J., T.~Eren, D.~K. Goldenberg, A.~S. Morse, W.~Whiteley, Y.~R. Yang,
  B.~D. Anderson, and P.~N. Belhumeur (2006).
\newblock A theory of network localization.
\newblock {\em IEEE Transactions on Mobile Computing\/}~{\em 5\/}(12),
  1663--1678.

\bibitem[\protect\citeauthoryear{Aumann and Kruskal}{Aumann and
  Kruskal}{1958}]{aumann1958coefficients}
Aumann, R.~J. and J.~Kruskal (1958).
\newblock The coefficients in an allocation problem.
\newblock {\em Naval Research Logistics Quarterly\/}~{\em 5\/}(2), 111--123.

\bibitem[\protect\citeauthoryear{Bennett}{Bennett}{1956}]{bennett1956determination}
Bennett, J.~F. (1956).
\newblock Determination of the number of independent parameters of a score
  matrix from the examination of rank orders.
\newblock {\em Psychometrika\/}~{\em 21\/}(4), 383--393.

\bibitem[\protect\citeauthoryear{Bennett and Hays}{Bennett and
  Hays}{1960}]{bennett1960multidimensional}
Bennett, J.~F. and W.~L. Hays (1960).
\newblock Multidimensional unfolding: Determining the dimensionality of ranked
  preference data.
\newblock {\em Psychometrika\/}~{\em 25\/}(1), 27--43.

\bibitem[\protect\citeauthoryear{Blumenthal}{Blumenthal}{1938}]{blumenthal1938distance}
Blumenthal, L.~M. (1938).
\newblock {\em Distance Geometries: A Study of the Development of Abstract
  Metrics}.
\newblock University of Missouri Studies.

\bibitem[\protect\citeauthoryear{Borg and Groenen}{Borg and
  Groenen}{2005}]{borg2005modern}
Borg, I. and P.~J. Groenen (2005).
\newblock {\em Modern Multidimensional Scaling: Theory and Applications}.
\newblock Springer.

\bibitem[\protect\citeauthoryear{Bradley and Terry}{Bradley and
  Terry}{1952}]{bradley1952rank}
Bradley, R.~A. and M.~E. Terry (1952).
\newblock Rank analysis of incomplete block designs: I. the method of paired
  comparisons.
\newblock {\em Biometrika\/}~{\em 39\/}(3/4), 324--345.

\bibitem[\protect\citeauthoryear{Busing, Groenen, and Heiser}{Busing
  et~al.}{2005}]{busing2005avoiding}
Busing, F.~M., P.~J. Groenen, and W.~J. Heiser (2005).
\newblock Avoiding degeneracy in multidimensional unfolding by penalizing on
  the coefficient of variation.
\newblock {\em Psychometrika\/}~{\em 70\/}(1), 71--98.

\bibitem[\protect\citeauthoryear{Canal, Massimino, Davenport, and Rozell}{Canal
  et~al.}{2019}]{pmlr-v97-canal19a}
Canal, G., A.~Massimino, M.~Davenport, and C.~Rozell (2019).
\newblock Active embedding search via noisy paired comparisons.
\newblock In {\em International Conference on Machine Learning}, Volume~97,
  pp.\  902--911. PMLR.

\bibitem[\protect\citeauthoryear{Carroll}{Carroll}{1972}]{carroll1972individual}
Carroll, D.~J. (1972).
\newblock Individual differences and multidimensional scaling.
\newblock In R.~Shepard, A.~K. Romney, and S.~B. Nerlove (Eds.), {\em
  Multidimensional Scaling: Theory and Applications in the Behavioral
  Sciences}, pp.\  105--155. Seminar Press.

\bibitem[\protect\citeauthoryear{Carroll and Chang}{Carroll and
  Chang}{1967}]{carroll1967}
Carroll, D.~J. and J.~J. Chang (1967).
\newblock Relating preference data to multidimensional scaling solutions via a
  generalization of {C}oomb's unfolding model.
\newblock In {\em Meeting of the Psychometric Society}.

\bibitem[\protect\citeauthoryear{Chrzanowski and Konecny}{Chrzanowski and
  Konecny}{1965}]{chrzanowski1965theoretical}
Chrzanowski, A. and G.~Konecny (1965).
\newblock Theoretical comparison of triangulation, trilateration and
  traversing.
\newblock {\em The Canadian Surveyor\/}~{\em 19\/}(4), 353--366.

\bibitem[\protect\citeauthoryear{Coombs}{Coombs}{1950}]{coombs1950psychological}
Coombs, C.~H. (1950).
\newblock Psychological scaling without a unit of measurement.
\newblock {\em Psychological Review\/}~{\em 57\/}(3), 145.

\bibitem[\protect\citeauthoryear{Cox and Cox}{Cox and
  Cox}{2000}]{cox2000multidimensional}
Cox, T.~F. and M.~A. Cox (2000).
\newblock {\em Multidimensional Scaling}.
\newblock CRC Press.

\bibitem[\protect\citeauthoryear{Coxeter}{Coxeter}{1961}]{coxeter1961introduction}
Coxeter, H. S.~M. (1961).
\newblock {\em Introduction to Geometry}.
\newblock John Wiley \& Sons.

\bibitem[\protect\citeauthoryear{Davenport}{Davenport}{2013}]{davenport2013lost}
Davenport, M.~A. (2013).
\newblock Lost without a compass: Nonmetric triangulation and landmark
  multidimensional scaling.
\newblock In {\em Computational Advances in Multi-Sensor Adaptive Processing},
  pp.\  13--16. IEEE.

\bibitem[\protect\citeauthoryear{Davidson}{Davidson}{1972}]{davidson1972geometrical}
Davidson, J. (1972).
\newblock A geometrical analysis of the unfolding model: nondegenerate
  solutions.
\newblock {\em Psychometrika\/}~{\em 37\/}(2), 193--216.

\bibitem[\protect\citeauthoryear{Davidson}{Davidson}{1973}]{davidson1973geometrical}
Davidson, J. (1973).
\newblock A geometrical analysis of the unfolding model: general solutions.
\newblock {\em Psychometrika\/}~{\em 38\/}(3), 305--336.

\bibitem[\protect\citeauthoryear{de~Leeuw}{de~Leeuw}{1983}]{de1983}
de~Leeuw, J. (1983).
\newblock On degenerate nonmetric unfolding solutions.
\newblock Technical report, Department of Data Theory, University of Leiden.

\bibitem[\protect\citeauthoryear{{de Leeuw} and Heiser}{{de Leeuw} and
  Heiser}{1982}]{DELEEUW1982285}
{de Leeuw}, J. and W.~Heiser (1982).
\newblock Theory of multidimensional scaling.
\newblock In {\em Classification Pattern Recognition and Reduction of
  Dimensionality}, Volume~2 of {\em Handbook of Statistics}, pp.\  285--316.
  Elsevier.

\bibitem[\protect\citeauthoryear{Delicado and Pach{\'o}n-Garc{\'\i}a}{Delicado
  and Pach{\'o}n-Garc{\'\i}a}{2024}]{delicado2024multidimensional}
Delicado, P. and C.~Pach{\'o}n-Garc{\'\i}a (2024).
\newblock Multidimensional scaling for big data.
\newblock {\em Advances in Data Analysis and Classification\/}.

\bibitem[\protect\citeauthoryear{Ellenberg and Jain}{Ellenberg and
  Jain}{2019}]{ellenberg2019convergence}
Ellenberg, J.~S. and L.~Jain (2019).
\newblock Convergence rates for ordinal embedding.
\newblock {\em arXiv preprint arXiv:1904.12994\/}.

\bibitem[\protect\citeauthoryear{Fang}{Fang}{1986}]{fang1986trilateration}
Fang, B.~T. (1986).
\newblock Trilateration and extension to global positioning system navigation.
\newblock {\em Journal of Guidance, Control, and Dynamics\/}~{\em 9\/}(6),
  715--717.

\bibitem[\protect\citeauthoryear{Gouic}{Gouic}{2023}]{gouic2023recovering}
Gouic, T.~L. (2023).
\newblock Recovering a metric from its full ordinal information.
\newblock {\em Discrete \& Computational Geometry\/}~{\em 69\/}(1), 123--138.

\bibitem[\protect\citeauthoryear{Gower}{Gower}{1966}]{gower1966some}
Gower, J.~C. (1966).
\newblock Some distance properties of latent root and vector methods used in
  multivariate analysis.
\newblock {\em Biometrika\/}~{\em 53\/}(3-4), 325--338.

\bibitem[\protect\citeauthoryear{Gower}{Gower}{1968}]{gower1968adding}
Gower, J.~C. (1968).
\newblock Adding a point to vector diagrams in multivariate analysis.
\newblock {\em Biometrika\/}~{\em 55\/}(3), 582--585.

\bibitem[\protect\citeauthoryear{Hays and Bennett}{Hays and
  Bennett}{1961}]{hays1961multidimensional}
Hays, W.~L. and J.~F. Bennett (1961).
\newblock Multidimensional unfolding: Determining configuration from complete
  rank order preference data.
\newblock {\em Psychometrika\/}~{\em 26\/}(2), 221--238.

\bibitem[\protect\citeauthoryear{Jain, Jamieson, and Nowak}{Jain
  et~al.}{2016}]{jain2016finite}
Jain, L., K.~G. Jamieson, and R.~Nowak (2016).
\newblock Finite sample prediction and recovery bounds for ordinal embedding.
\newblock In {\em Advances in Neural Information Processing Systems}, pp.\
  2711--2719.

\bibitem[\protect\citeauthoryear{Javanmard and Montanari}{Javanmard and
  Montanari}{2013}]{javanmard2013localization}
Javanmard, A. and A.~Montanari (2013).
\newblock Localization from incomplete noisy distance measurements.
\newblock {\em Foundations of Computational Mathematics\/}~{\em 13\/}(3),
  297--345.

\bibitem[\protect\citeauthoryear{Kleindessner and {von Luxburg}}{Kleindessner
  and {von Luxburg}}{2014}]{klein}
Kleindessner, M. and U.~{von Luxburg} (2014).
\newblock Uniqueness of ordinal embedding.
\newblock In {\em Conference on Learning Theory}, pp.\  40--67.

\bibitem[\protect\citeauthoryear{Kruskal}{Kruskal}{1964a}]{kruskal1964multidimensional}
Kruskal, J.~B. (1964a).
\newblock Multidimensional scaling by optimizing goodness of fit to a nonmetric
  hypothesis.
\newblock {\em Psychometrika\/}~{\em 29}, 1--27.

\bibitem[\protect\citeauthoryear{Kruskal}{Kruskal}{1964b}]{kruskal1964nonmetric}
Kruskal, J.~B. (1964b).
\newblock Nonmetric multidimensional scaling: a numerical method.
\newblock {\em Psychometrika\/}~{\em 29\/}(2), 115--129.

\bibitem[\protect\citeauthoryear{Laurent}{Laurent}{2001}]{laurent2001matrix}
Laurent, M. (2001).
\newblock Matrix completion problems.
\newblock In {\em Encyclopedia of Optimization}, pp.\  221--229. Springer.

\bibitem[\protect\citeauthoryear{Li, Tang, Charon, and Priebe}{Li
  et~al.}{2020}]{li2020central}
Li, G., M.~Tang, N.~Charon, and C.~Priebe (2020).
\newblock Central limit theorems for classical multidimensional scaling.
\newblock {\em Electronic Journal of Statistics\/}~{\em 14}, 2362--2394.

\bibitem[\protect\citeauthoryear{Liu, Ji, Wu, and Liu}{Liu
  et~al.}{2016}]{liu2016towards}
Liu, H., R.~Ji, Y.~Wu, and W.~Liu (2016).
\newblock Towards optimal binary code learning via ordinal embedding.
\newblock In {\em Conference on Artificial Intelligence}. AAAI.

\bibitem[\protect\citeauthoryear{Mair, Groenen, and de~Leeuw}{Mair
  et~al.}{2022}]{mair2022more}
Mair, P., P.~J. Groenen, and J.~de~Leeuw (2022).
\newblock More on multidimensional scaling and unfolding in r: smacof version
  2.
\newblock {\em Journal of Statistical Software\/}~{\em 102}, 1--47.

\bibitem[\protect\citeauthoryear{Massimino and Davenport}{Massimino and
  Davenport}{2021}]{massimino2021you}
Massimino, A.~K. and M.~A. Davenport (2021).
\newblock As you like it: Localization via paired comparisons.
\newblock {\em Journal of Machine Learning Research\/}~{\em 22}, 1--37.

\bibitem[\protect\citeauthoryear{Navidi, Murphy~Jr, and Hereman}{Navidi
  et~al.}{1998}]{navidi1998statistical}
Navidi, W., W.~S. Murphy~Jr, and W.~Hereman (1998).
\newblock Statistical methods in surveying by trilateration.
\newblock {\em Computational Statistics \& Data Analysis\/}~{\em 27\/}(2),
  209--227.

\bibitem[\protect\citeauthoryear{Priyantha, Balakrishnan, Demaine, and
  Teller}{Priyantha et~al.}{2003}]{priyantha2003anchor}
Priyantha, N.~B., H.~Balakrishnan, E.~Demaine, and S.~Teller (2003).
\newblock Anchor-free distributed localization in sensor networks.
\newblock In {\em Conference on Embedded Networked Sensor Systems}, pp.\
  340--341. AMC.

\bibitem[\protect\citeauthoryear{Reckase}{Reckase}{2009}]{reckase2009multidimensional}
Reckase, M.~D. (2009).
\newblock {\em Multidimensional Item Response Theory}.
\newblock Springer Science \& Business Media.

\bibitem[\protect\citeauthoryear{Savvides, Han, and Strivastava}{Savvides
  et~al.}{2001}]{savvides2001dynamic}
Savvides, A., C.-C. Han, and M.~B. Strivastava (2001).
\newblock Dynamic fine-grained localization in ad-hoc networks of sensors.
\newblock In {\em International Conference on Mobile Computing and Networking},
  pp.\  166--179. ACM.

\bibitem[\protect\citeauthoryear{Seber}{Seber}{2004}]{seber2009multivariate}
Seber, G.~A. (2004).
\newblock {\em Multivariate Observations}.
\newblock John Wiley \& Sons.

\bibitem[\protect\citeauthoryear{Shepard}{Shepard}{1962a}]{MR0140376}
Shepard, R.~N. (1962a).
\newblock The analysis of proximities: multidimensional scaling with an unknown
  distance function. {I}.
\newblock {\em Psychometrika\/}~{\em 27}, 125--140.

\bibitem[\protect\citeauthoryear{Shepard}{Shepard}{1962b}]{MR0173342}
Shepard, R.~N. (1962b).
\newblock The analysis of proximities: multidimensional scaling with an unknown
  distance function. {II}.
\newblock {\em Psychometrika\/}~{\em 27}, 219--246.

\bibitem[\protect\citeauthoryear{Shepard}{Shepard}{1966}]{shepard1966metric}
Shepard, R.~N. (1966).
\newblock Metric structures in ordinal data.
\newblock {\em Journal of Mathematical Psychology\/}~{\em 3\/}(2), 287--315.

\bibitem[\protect\citeauthoryear{Srinivasan and Shocker}{Srinivasan and
  Shocker}{1973}]{srinivasan1973linear}
Srinivasan, V. and A.~D. Shocker (1973).
\newblock Linear programming techniques for multidimensional analysis of
  preferences.
\newblock {\em Psychometrika\/}~{\em 38\/}(3), 337--369.

\bibitem[\protect\citeauthoryear{Stumpf}{Stumpf}{1880}]{stumpf1880}
Stumpf, C. (1880).
\newblock {\em Tonpsychologie}.
\newblock Leipzig: Teubner.

\bibitem[\protect\citeauthoryear{Suppes and Winet}{Suppes and
  Winet}{1955}]{suppes1955axiomatization}
Suppes, P. and M.~Winet (1955).
\newblock An axiomatization of utility based on the notion of utility
  differences.
\newblock {\em Management Science\/}, 259--270.

\bibitem[\protect\citeauthoryear{Tamuz, Liu, Belongie, Shamir, and Kalai}{Tamuz
  et~al.}{2011}]{tamuz2011adaptively}
Tamuz, O., C.~Liu, S.~Belongie, O.~Shamir, and A.~T. Kalai (2011).
\newblock Adaptively learning the crowd kernel.
\newblock In {\em International Conference on Machine Learning}, pp.\
  673--680.

\bibitem[\protect\citeauthoryear{Terada and {von Luxburg}}{Terada and {von
  Luxburg}}{2014}]{terada2014local}
Terada, Y. and U.~{von Luxburg} (2014).
\newblock Local ordinal embedding.
\newblock In {\em International Conference on Machine Learning}, pp.\
  847--855.

\bibitem[\protect\citeauthoryear{Thurstone}{Thurstone}{1927}]{thurstone1927law}
Thurstone, L. (1927).
\newblock A law of comparative judgment.
\newblock {\em Psychological Review\/}~{\em 34\/}(4), 273--286.

\bibitem[\protect\citeauthoryear{Torgerson}{Torgerson}{1952}]{torgerson1952multidimensional}
Torgerson, W.~S. (1952).
\newblock Multidimensional scaling: I. {T}heory and method.
\newblock {\em Psychometrika\/}~{\em 17\/}(4), 401--419.

\bibitem[\protect\citeauthoryear{Tucker}{Tucker}{1960}]{tucker1960intra}
Tucker, L.~R. (1960).
\newblock Intra-individual and inter-individual multidimensionality.
\newblock {\em Psychological Scaling: Theory and Applications\/}, 155--167.

\bibitem[\protect\citeauthoryear{{van der Maaten} and Weinberger}{{van der
  Maaten} and Weinberger}{2012}]{van2012stochastic}
{van der Maaten}, L. and K.~Weinberger (2012).
\newblock Stochastic triplet embedding.
\newblock In {\em International Workshop on Machine Learning for Signal
  Processing}, pp.\  1--6. IEEE.

\bibitem[\protect\citeauthoryear{Vishwanath and Arias-Castro}{Vishwanath and
  Arias-Castro}{2025}]{vishwanath2025minimax}
Vishwanath, S. and E.~Arias-Castro (2025).
\newblock Minimax optimality of classical scaling under general noise
  conditions.
\newblock {\em arXiv preprint arXiv:2502.00947\/}.

\bibitem[\protect\citeauthoryear{Yang and Chen}{Yang and
  Chen}{2009}]{yang2009indoor}
Yang, J. and Y.~Chen (2009).
\newblock Indoor localization using improved {RSS}-based lateration methods.
\newblock In {\em Global Telecommunications Conference}, pp.\  1--6. IEEE.

\bibitem[\protect\citeauthoryear{Young and Hamer}{Young and
  Hamer}{1987}]{young1987multidimensional}
Young, F.~W. and R.~M.~E. Hamer (1987).
\newblock {\em Multidimensional Scaling: History, Theory, and Applications}.
\newblock Lawrence Erlbaum Associates, Inc.

\bibitem[\protect\citeauthoryear{Young and Householder}{Young and
  Householder}{1938}]{young1938discussion}
Young, G. and A.~S. Householder (1938).
\newblock Discussion of a set of points in terms of their mutual distances.
\newblock {\em Psychometrika\/}~{\em 3\/}(1), 19--22.

\end{thebibliography}
}

\end{document}